
\documentclass[12pt]{amsart}

\usepackage[dvipsnames]{xcolor}
\usepackage{graphicx}
\usepackage{dsfont}
\usepackage{amsmath}
\usepackage{color}
\usepackage{caption}
\usepackage{subcaption}
\usepackage[T1]{fontenc}
\usepackage{mathtools}
\usepackage{amsfonts,amssymb,amscd}
\usepackage{enumerate}


\setlength{\textwidth}{\paperwidth}
\addtolength{\textwidth}{-2in}
\calclayout








\newtheorem{thm}{Theorem}[section]
\newtheorem{prop}[thm]{Proposition}
\newtheorem{lem}[thm]{Lemma}
\newtheorem{cor}[thm]{Corollary}

\newtheorem*{question}{Question}




\theoremstyle{definition}


\newcommand{\uhp}{\mathbb{H}_u} 
\newcommand{\rhp}{\mathbb{H}_r} 

\makeatletter
\renewcommand*\env@matrix[1][\arraystretch]{%
  \edef\arraystretch{#1}%
  \hskip -\arraycolsep
  \let\@ifnextchar\new@ifnextchar
  \array{*\c@MaxMatrixCols c}}
\makeatother





\theoremstyle{remark}

\theoremstyle{question}


\numberwithin{equation}{section}


\newcommand{\R}{\mathbb{R}}  

\newcommand{\Z}{\mathbb{Z}}  
\newcommand{\N}{\mathbb{N}}  

\DeclareMathOperator{\diam}{diameter}
\DeclareMathOperator{\area}{area}



\DeclareMathOperator{\dist}{dist} 

%
%
%
%
\begin{document}


%
%
%

\title{Prescribing the Postsingular Dynamics of Meromorphic Functions
}


\author{Christopher J. Bishop         \and
        Kirill Lazebnik 
}

\maketitle

\begin{abstract}
We show that any dynamics on any discrete planar sequence $S$ can be realized by the postsingular dynamics of some transcendental meromorphic function, provided we allow for small perturbations of $S$. This work is
 motivated by an analogous result of DeMarco, Koch and McMullen 
in \cite{DeMarco2018} for finite $S$ in the rational setting. The proof contains a method for constructing meromorphic functions with good control over both the postsingular set of $f$ and the geometry of $f$, using the Folding Theorem of \cite{Bis15} and a classical fixpoint theorem \cite{MR1513031}. 

\end{abstract}

\section{Introduction}
\label{introduction}

The \emph{singular} set $S(f)$ of a meromorphic function $f:\mathbb{C}\rightarrow\hat{\mathbb{C}}$ is the collection of values $w$ at which one can not define all branches of the inverse $f^{-1}$ in any neighborhood of $w$. If $f$ is rational, then $S(f)$ coincides with the collection of critical values of $f$. If $f$ is transcendental meromorphic, $f^{-1}$ may also fail to be defined in a neighborhood of an \emph{asymptotic value}. The value $w$ is an asymptotic value of $f$ if there is a curve $\gamma(t)\rightarrow\infty$ for which $f(\gamma(t))\rightarrow w$; for instance the exponential map has one asymptotic value at $0$. In the transcendental setting, the set $S(f)$ coincides with the closure of the collection of critical and asymptotic values. 

The \emph{postsingular} set $P(f)$ of a meromorphic function is the closure of the union of forward iterates of the singular set: $\overline{ \cup_{n=0}^{\infty} f^n(S(f))}$. The singular and postsingular sets play an important rule in the study of the dynamics of $f$, both in the rational and transcendental settings (see for instance \cite{MR1230383} for the rational setting, and \cite{2017arXiv170909095S} for the transcendental setting.) The present work addresses the question of allowable geometries and dynamics for the postsingular sets of meromorphic functions. Our main result states that any postsingular dynamics on any discrete sequence can be realized provided one allows for arbitrarily small perturbations of that sequence:

\begin{thm}\label{main}
Let $S\subset\mathbb{C}$ be a discrete sequence (no finite accumulation points) with $4\leq|S|\leq\infty$, let $h: S \rightarrow S$ be any map, and let $\varepsilon>0$. Then there exists a transcendental meromorphic function $f:\mathbb{C}\rightarrow\widehat{\mathbb{C}}$ and a bijection $\psi:S\rightarrow P(f)$ with $|\psi(s)-s|\rightarrow0$ as $s\rightarrow\infty$, $|\psi(s)-s|\leq\varepsilon$ for all $s\in S$, and $f|_{P(f)}=\psi\circ h\circ\psi^{-1}$.
\end{thm}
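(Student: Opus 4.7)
The plan is to treat the bijection $\psi$ as the unknown and solve for it via a fixed-point argument that couples the Folding Theorem of \cite{Bis15} with the classical fixed-point theorem \cite{MR1513031}. Given a candidate bijection $\psi_0 : S \to \mathbb{C}$ satisfying $|\psi_0(s)-s|\le\varepsilon$ and $\psi_0(s)-s\to 0$, I would use the Folding Theorem to construct a transcendental meromorphic function $f_{\psi_0}$ whose singular set lies in $\psi_0(S)$ and whose combinatorics arrange each $\psi_0(s)$ to be a critical point with image $\psi_0(h(s))$. Such a construction typically proceeds by first building a quasi-regular model $g_{\psi_0}$ satisfying $g_{\psi_0}(\psi_0(s))=\psi_0(h(s))$ exactly, then applying the Measurable Riemann Mapping Theorem to obtain a QC homeomorphism $\phi_{\psi_0}$ with $f_{\psi_0}:=g_{\psi_0}\circ\phi_{\psi_0}^{-1}$ meromorphic. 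The QC correction shifts each critical point to the new location $\phi_{\psi_0}(\psi_0(s))$ while leaving the critical values $\psi_0(h(s))$ unchanged.

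Define the self-map $T(\psi_0)(s):=\phi_{\psi_0}(\psi_0(s))$ on the convex set $\mathcal{A}_\varepsilon$ of admissible candidates. A fixed point $\psi=T(\psi)$ means $\phi_\psi$ fixes every point of $\psi(S)$ pointwise, and then for each $s$ one computes
$$f_\psi(\psi(s))=g_\psi(\phi_\psi^{-1}(\psi(s)))=g_\psi(\psi(s))=\psi(h(s)),$$
yielding the required dynamical conjugacy. Equipping $\mathcal{A}_\varepsilon$ with a sup-type norm that incorporates the decay at infinity makes it a compact convex subset of a Banach space, to which the fixed-point theorem of \cite{MR1513031} may be applied.

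The main obstacle will be verifying the hypotheses: that $T$ is continuous and $T(\mathcal{A}_\varepsilon)\subset\mathcal{A}_\varepsilon$. Invariance requires quantitative control showing that $\phi_{\psi_0}$ displaces each $\psi_0(s)$ by at most $\varepsilon$, which reduces to producing a quasi-regular model whose dilatation is small and supported in regions disjoint from small neighborhoods of the $\psi_0(s)$. The decay $|\psi(s)-s|\to 0$ at infinity follows by arranging the dilatation to shrink near large-modulus points, which is possible because $S$ is discrete. Continuity of $T$ is the technically most delicate step: it requires the Folding Theorem output to vary continuously with the input vertex positions when the combinatorial structure of the underlying graph is held fixed, and one must show that the resulting solution to the Beltrami equation depends continuously on these positions in a topology strong enough to control the location of individual points.

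Once a fixed point $\psi$ is in hand, the equality $P(f_\psi)=\psi(S)$ follows because $S(f_\psi)\subset\psi(S)$ by construction, $\psi(S)$ is forward-invariant under $f_\psi$ by the conjugacy just established, $\psi(S)$ is discrete (hence closed) as a bounded perturbation of $S$, and each $\psi(s)$ is arranged to lie in $S(f_\psi)$ by the Folding Theorem setup; surjectivity of $\psi$ onto $P(f_\psi)$ then gives the bijection claimed in the theorem.
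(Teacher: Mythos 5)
Your overall strategy is the same as the paper's: build a quasiregular model by the Folding Theorem, correct it with the Measurable Riemann Mapping Theorem, and choose the free parameters by Tychonoff's fixed-point theorem on an infinite product of shrinking closed disks. However, there are two genuine gaps in the way you have set it up.

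First, you parametrize by the positions of the \emph{critical points}: your model $g_{\psi_0}$ has critical points at $\psi_0(s)$, so the underlying graph (whose D-components are centered at the critical points) changes with $\psi_0$. You correctly identify continuity of $T$ as the delicate step, but as formulated it is not just delicate --- it is unavailable: the Folding Theorem involves non-canonical choices (the added trees, the quasiconformal maps $\eta_j$ on the unbounded components) and comes with no continuity statement in the vertex positions, so $\psi_0\mapsto\mu_{g_{\psi_0}}$ need not be continuous. The paper avoids this entirely by fixing the graph once and for all, with D-components centered at the \emph{original} points $s_i$, and varying only the \emph{critical values} $s_j^*\in\overline{D(\varepsilon_j,s_j)}$ through explicit quasiconformal self-maps $\rho$ of the disk with $\rho(0)=w$. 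Then the Beltrami coefficient depends on the parameters only on thin annuli inside the D-components, and continuity into $L^\infty$ (in the product topology) is transparent. The fixed-point equation becomes $\phi(s_j)=s_j^*$, i.e.\ $\psi=\phi|_S$, rather than your condition that $\phi_\psi$ fix $\psi(S)$ pointwise; the two are different parametrizations of the same idea, but only the first one makes the continuity verifiable.

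Second, the folding construction forces singular values that are not freely specifiable: the vertices of $G'$ produce critical values at $\pm 1$, and the unbounded components produce an asymptotic value; hence $S(f_{\psi_0})\not\subset\psi_0(S)$ as you claim. The paper deals with this by (a) a preliminary linear conjugation placing $\pm1$ and a point of the unit disk into $S$ --- this is precisely where the hypothesis $|S|\ge 4$ is used --- and (b) post-composing $g$ with an auxiliary quasiconformal map $\eta$ that moves $\pm1$ to two additional fixed-point coordinates $x,y$, since $\pm1$ cannot be perturbed within the folding itself. Related and more minor: if $h$ is not onto, then $\overline{\cup_n f^n(S(f))}$ is a proper subset of $\psi(S)$ and $\psi$ is not a bijection onto $P(f)$; the paper augments $S$ and extends $h$ to a surjection before running the argument. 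Your compactness claim for $\mathcal{A}_\varepsilon$ in a sup-type norm is fine (on a product of disks with radii tending to zero the uniform and product topologies agree), but the three points above need to be repaired for the proof to go through.
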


\noindent Theorem \ref{main} was inspired by Theorem 1.3 of 
\cite{DeMarco2018}:

\begin{thm}\label{DKM} Let $h : S \rightarrow S$ be an arbitrary map defined on a finite set $S \subset \hat{\mathbb{C}}$ with $|S| \geq 3$. Then there exists a sequence of rigid postcritically finite rational maps $f_n$ such that $|P(f_n)| = |S|$, $P(f_n) \rightarrow S$ and $f_n|P(f_n) \rightarrow h|S$ as $n\rightarrow\infty$.\end{thm}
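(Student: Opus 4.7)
The plan is to realize the combinatorial data $(S,h)$ as the postcritical dynamics of a rational map via Thurston's characterization of postcritically finite branched coverings. For each $n$, I would first construct a topological orientation-preserving branched covering $F_n:S^2\to S^2$, of sufficiently high degree, whose postcritical set $P_n$ has cardinality exactly $|S|$, lies within spherical distance $1/n$ of $S$, and for which the natural bijection $\iota_n:P_n\to S$ satisfies $\iota_n\circ F_n|_{P_n}=h\circ \iota_n$. Such an $F_n$ is built by prescribing local degrees at each point of a perturbation $P_n$ of $S$ together with preimage points compatible with the graph of $h$; additional critical points can be placed inside forward $h$-orbits in order to absorb whatever extra degree is needed to make the total covering degree match the combinatorial requirements.

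Second, I would arrange that $F_n$ admits no Thurston obstruction, i.e.\ no $F_n$-invariant multicurve whose Thurston transition matrix has spectral radius $\geq 1$. The cleanest route is to choose the auxiliary critical points so that the associated orbifold has negative Euler characteristic and every critical point is strictly preperiodic under $F_n$; in this hyperbolic-orbifold setting, Thurston's characterization theorem produces a postcritically finite rational map $f_n$, unique up to Möbius conjugacy and combinatorially equivalent to $F_n$. Rigidity of $f_n$ then follows from the triviality of the Teichmüller space of the orbifold, so no deformations of $f_n$ preserve the postcritical dynamics.

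Third, I would verify convergence. Normalize each $f_n$ by a Möbius conjugation so that three distinguished points of $P(f_n)$ correspond, under $\iota_n$, to three designated points of $S$ (this uses $|S|\geq 3$). The combinatorial equivalence identifies $P(f_n)$ with $P_n$ as marked sets, and a standard compactness argument for the Thurston pullback operator on the relevant finite-dimensional Teichmüller space shows that, as $F_n$ is taken to approximate the fixed target configuration ever more closely, the normalized rational maps $f_n$ converge uniformly on compacta of $\hat{\mathbb C}$ with $P(f_n)\to S$ and $f_n|_{P(f_n)}\to h|_S$, as required.

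The main obstacle I anticipate is the second step: certifying the absence of Thurston obstructions for a branched covering whose postcritical set is forced to have the fixed small size $|S|$ while the degree may need to be large in order to accommodate the map $h$. When many critical points must be crammed into only $|S|$ postcritical targets, invariant multicurves become easier to produce, and ruling them out requires a careful combinatorial choice of where the auxiliary critical values are placed inside the $h$-graph. Distributing the ramification among strictly preperiodic cycles and appealing to Levy-type cycle criteria is the natural way to force every candidate obstruction to have strictly contracting transition matrix, and this combinatorial--spectral verification is where the bulk of the technical work would concentrate.
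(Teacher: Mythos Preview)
This theorem is not proved in the present paper; it is quoted as Theorem~1.3 of \cite{DeMarco2018}, and the only comment the paper makes about its proof is the single clause that it ``uses iteration on Teichm\"uller space.'' Your proposal, via Thurston's characterization of postcritically finite branched covers, is precisely a strategy of this type (Thurston's theorem is established by iterating a pullback map on the Teichm\"uller space of the sphere punctured at the postcritical set), so at the level of overall framework you are in line with what the paper says about the cited proof.

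That said, your outline has two real gaps. First, in your third step the convergence $P(f_n)\to S$ does not follow simply from having chosen topological models $F_n$ whose postcritical sets $P_n$ lie within $1/n$ of $S$: Thurston's theorem produces $f_n$ only up to combinatorial equivalence, as a fixed point of the pullback on Teichm\"uller space, and nothing in that statement places $P(f_n)$ near $P_n$ in $\hat{\mathbb C}$. The ``standard compactness argument for the Thurston pullback operator'' you invoke is not standard; getting quantitative control on where the fixed point lands is exactly the delicate part, and in \cite{DeMarco2018} it is handled by letting the degree grow and analyzing the contraction rate of the pullback, not by compactness alone. Second, your device for eliminating Thurston obstructions---forcing all critical points to be strictly preperiodic and invoking Levy-type criteria---is insufficient in general: strictly preperiodic critical orbits do not preclude obstructions, and Levy cycles detect only a special subclass of invariant multicurves. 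You rightly identify this as the crux, but the combinatorial mechanism you describe does not actually close it.
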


The proof of this result in \cite{DeMarco2018}
 uses iteration on Teichm\"uller space, whereas the
 proof of Theorem \ref{main} uses a fixpoint theorem
 \cite{MR1513031} and quasiconformal folding 
methods developed in \cite{Bis15} which we will 
discuss at length in Section 2.
Quasiconformal folding is a method of
associating entire functions to certain infinite
planar graphs introduced in \cite{Bis15}, 
and was applied there  to construct
various new examples, such as
a wandering domain for an entire function in  the  Eremenko-Lyubich class.
Other applications have been given  by
Fagella, Godillon and  Jarque \cite{MR3339086},
 Fagella, Jarque and Lazebnik \cite{2017arXiv171110629F},
Lazebnik \cite{MR3579902}, \cite{2019arXiv190410086L},
Osborne and Sixsmith \cite{MR3525384},
and  Rempe-Gillen  \cite{arc-like}.
We will review the basic folding construction in  
Section 2.

We now briefly sketch the proof of Theorem \ref{main}, leaving details and some special considerations to subsequent sections. We refer to Figure \ref{fig:example_of_S}. Recall we are given a discrete sequence $S=(s_n)$ and a map $h:S\rightarrow S$. We construct an infinite graph $G$ by enclosing points $s_i\in S$ by disjoint Euclidean discs $D_i$ centered at $s_i$. As discussed in Sections \ref{folding_first_section} and \ref{Graph_I}, we will associate a quasiregular function $g:\mathbb{C}\rightarrow\widehat{\mathbb{C}}$ to the graph $G$. For now, we give the definition of $g$ in a disc $D_i$ under the assumption that $h(s_i)=s_j\in\mathbb{D}$. If $z\in D_i$, then $g(z):=\rho_i \circ (z\mapsto z^d) \circ \tau_i(z)$, where $d\in2\mathbb{N}$, $\tau_i:D_i\rightarrow\mathbb{D}$ is a Euclidean similarity (so $\tau_i(s_i)=0$), and $\rho_i$ is a quasiconformal self-map of $\mathbb{D}$ which is conformal in $(3/4)\mathbb{D}$ and $\rho|_{\partial\mathbb{D}}=\textrm{id}$. The resulting quasiregular map $g$ will have a critical value at $\rho_i(0)$ coming from the critical point $s_i$ in $D_i$, and we will denote this critical value by $s_j^*$. The critical value $s_j^*$ should be thought of as a complex parameter in a small neighborhood of $s_j$ for now, and $s_j^*$ will eventually correspond to $\psi(s_j)$ where $\psi:S\rightarrow P(f)$ is the bijection of Theorem \ref{main}. We note that the definition of $g$ on $\mathbb{C}\setminus D_i$ will not depend on a choice of $s_j^*$.





\begin{figure}[htp]
\centerline{
\includegraphics[height=3in]{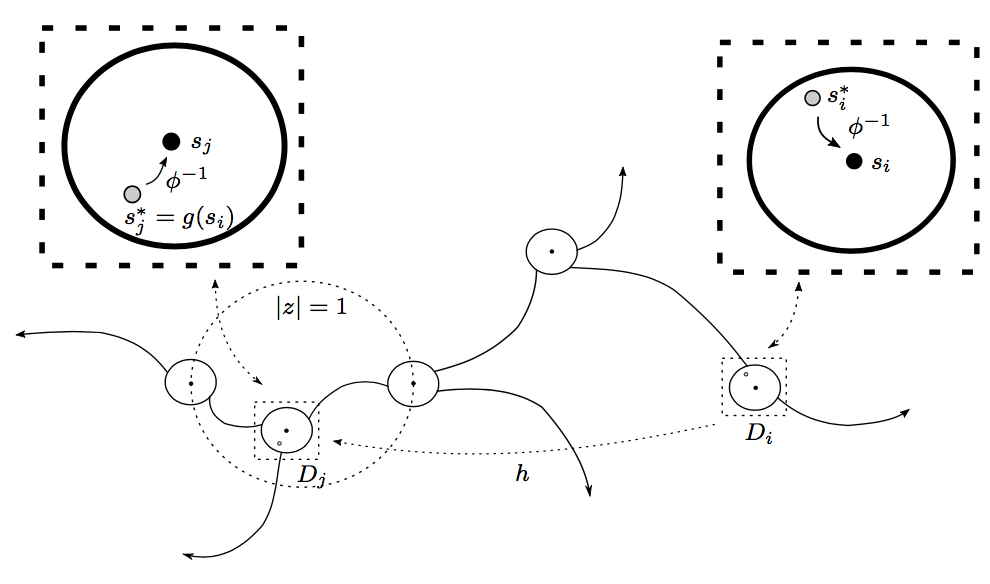}
}
  \caption{  Illustrated is the general strategy in the proof of Theorem \ref{main}. One applies the Folding Theorem to a graph $G$. Here $s_i\in S$ is the center of $D_i$, and $s_i^*$ is a critical value of $g$. The critical point $s_i$ of $g$ is sent to a critical value $s_j^*$ near $s_j=h(s_i)$. One then arranges (using a fixpoint theorem) for $s_j^*$ to be chosen so that $\phi^{-1}(s_j^*)=s_j$.
  }
  \label{fig:example_of_S}     
\end{figure}

Next we apply the measurable Riemann mapping theorem to obtain a quasiconformal map $\phi$ so that $g\circ\phi^{-1}$ is holomorphic. The crux of the proof of Theorem \ref{main} is to arrange for $s_j^*$ to be chosen so that $\phi^{-1}(s_j^*)=s_j\in S$, over all $j$. Indeed, then for $f=g\circ\phi^{-1}$ we would have $f(S(f))\subset S(f)$ with the desired dynamics, since $g\circ\phi^{-1}(s_j^*)=g(s_j)=(h(s_j))^*$ whence again one would have $\phi^{-1}((h(s_j))^*)=h(s_j)$. 

How do we arrange for the parameters $(s_j^*)$ to be chosen so that $\phi^{-1}(s_j^*)=s_j$ over all $j$? Let us consider for now the simpler problem of arranging for $\phi^{-1}(s_j^*)=s_j$ for some fixed, single index $j$. Of course the Beltrami coefficient $\mu_g$ of $g$, and hence the map $\phi$, depends on a choice of $s_j^*$; indeed varying the critical value $s_j^*$ varies the dilatation of $\rho_i$ in $\mathbb{D}$ (and hence the dilatation of $g$ in a small neighborhood of $\partial D_i$). However, as explained in Sections \ref{folding_first_section} and \ref{Graph_I}, one can arrange for the (uniformly bounded) dilatation of $g$ to be supported on a neighborhood of $G$ of arbitrarily small area. Hence one may prove that $\phi$ is uniformly close to the identity regardless of our choice of $s_j^*$ in a small neighborhood of $s_j$, say $|\phi(z)-z|<\varepsilon$ over all $z\in\mathbb{C}$.

Now consider moving the parameter $s_j^*$ continuously in $\overline{D(\varepsilon,s_j)}$. Namely, for each choice of $w\in \overline{D(\varepsilon,s_j)}$, we set $s_j^*:=w$, and we have some resulting quasiregular map $g_w$ and correction map $\phi_w$ where we have arranged for $|\phi_w(z)-z|<\varepsilon$ for $z\in\mathbb{C}$, and $\varepsilon$ is independent of $w$. Thus the map $w\rightarrow\phi_w(s_j)$ is a self-map of $ \overline{D(\varepsilon,s_j)}$, and by continuous dependence on parameters (see Theorem \ref{dependence}), $w\rightarrow\phi_w(s_j)$ is continuous. Thus we can apply a fixpoint theorem (in this instance the classical Brouwer fixpoint theorem) to yield some $w_0\in \overline{D(\varepsilon,s_j)}$ so that by choosing $s_j^*:=w_0$, we have $\phi_{w_0}^{-1}(s_j^*)=s_j$ as needed.

The argument to arrange for the parameters $(s_j^*)$ to be chosen so that $\phi^{-1}(s_j^*)=s_j$ over \emph{all} indices $j$ is similar, however one looks for a fixpoint among a continuous self-mapping of an infinite product of discs centered at the points $s_j$, and one appeals to the following infinite-dimensional fixpoint theorem due to Tychonoff \cite{MR1513031}:

\begin{thm}\label{fixed_point} Let $V$ be a locally convex topological vector space. For any non-empty compact convex set $X$ in $V$, any continuous function $f : X \rightarrow X$ has a fixpoint.
\end{thm}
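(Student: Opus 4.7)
The plan is to reduce to the finite-dimensional case (where Brouwer's theorem applies) by approximating $f$ via \emph{Schauder projections}, one for each convex balanced neighborhood $U$ of $0$, and then to extract a genuine fixpoint by passing to a limit along the net of such neighborhoods. Local convexity of $V$ is used to guarantee that the convex balanced neighborhoods of $0$ form a base at $0$, which is what makes the approximation scheme work.

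First I would fix such a neighborhood $U$. Since $X$ is compact, cover it by finitely many translates $x_1 + U, \ldots, x_n + U$ with $x_i \in X$, and choose a continuous partition of unity $\varphi_1, \ldots, \varphi_n$ on $X$ subordinate to this cover (available since compact Hausdorff spaces are normal). Define the Schauder projection $P_U : X \to C_U := \mathrm{conv}\{x_1,\ldots,x_n\}$ by $P_U(x) = \sum_{i=1}^n \varphi_i(x)\, x_i$. Because $U$ is convex and $x_i - x \in U$ whenever $\varphi_i(x) \ne 0$, we have $P_U(x) - x \in U$ for every $x \in X$. The composition $P_U \circ f$ is then a continuous self-map of the compact convex set $C_U$, which sits inside the finite-dimensional subspace spanned by $\{x_1,\ldots,x_n\}$. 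Brouwer's fixpoint theorem supplies $x_U \in C_U \subset X$ with $P_U(f(x_U)) = x_U$; in particular $x_U - f(x_U) \in U$.

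The collection of convex balanced neighborhoods of $0$, ordered by reverse inclusion, is directed, so $(x_U)$ is a net in $X$. By compactness, some subnet $(x_{U_\alpha})$ converges to a point $x_0 \in X$, and by continuity $f(x_{U_\alpha}) \to f(x_0)$. To see $x_0 = f(x_0)$, fix any neighborhood $W$ of $0$ and choose a convex balanced neighborhood $W'$ with $W' + W' + W' \subset W$. Eventually along the subnet we have $U_\alpha \subset W'$, $x_0 - x_{U_\alpha} \in W'$, and $f(x_{U_\alpha}) - f(x_0) \in W'$, so the decomposition
\[
x_0 - f(x_0) = (x_0 - x_{U_\alpha}) + (x_{U_\alpha} - f(x_{U_\alpha})) + (f(x_{U_\alpha}) - f(x_0))
\]
places $x_0 - f(x_0)$ in $W$. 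Since $W$ was arbitrary and $V$ is Hausdorff, $x_0 = f(x_0)$.

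The main obstacle is purely technical: the argument must be phrased with nets rather than sequences, since $V$ need not be metrizable, and it relies on both the convex balanced neighborhood base at $0$ and the availability of partitions of unity on $X$. Both are automatic from the hypotheses, so once the finite-dimensional reduction is correctly set up the result follows from Brouwer's theorem essentially by continuity and compactness.
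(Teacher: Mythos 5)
Theorem \ref{fixed_point} is quoted in the paper as a classical result of Tychonoff \cite{MR1513031} and is not proved there, so there is no internal argument to compare against. Your proof is the standard Schauder-projection argument and it is correct: the key points (the estimate $P_U(x)-x\in U$ via convexity and balancedness of $U$, the reduction to Brouwer on the finite-dimensional compact convex set $C_U$, and the three-term net estimate $W'+W'+W'\subset W$) are all in order. Two small points worth making explicit if you write this up: you should note that a finite-dimensional subspace of a Hausdorff topological vector space is linearly homeomorphic to some $\mathbb{R}^m$, which is what licenses the appeal to Brouwer for $C_U$; and the Hausdorff hypothesis, which you invoke at the end, is implicit in the usual convention for ``locally convex topological vector space'' and is genuinely needed for the conclusion $x_0=f(x_0)$.
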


\noindent For us the locally convex topological vector space of Theorem \ref{fixed_point} will be $\mathbb{C}^{\mathbb{N}}$ (a countable product of complex planes with seminorms $\rho_i((z_j)_{j=1}^\infty):=|z_i|$), and the non-empty compact convex set $X$ will be an infinite product of closed discs containing the points $s_i$ (which is compact by another result of Tychonoff). We remark that a similar fixpoint argument to the one described above was developed independently in \cite{2018arXiv180711820M}. We also record here a statement of continuous dependence on parameters (see, for instance, Theorem 7.5 of \cite{MR1230383}):


\begin{thm}\label{dependence} \emph{(Continuous dependence on parameters)} Let $\mu \in L^{\infty}(\mathbb{C})$ with $||\mu||_{L^\infty(\mathbb{C})}<1$. Denote by $\phi_\mu$ the unique quasiconformal solution of $\frac{\partial \phi_\mu}{\partial\bar{z}}=\mu\frac{\partial \phi_\mu}{\partial z}$ satisfying some fixed normalization. If $\mu_n \rightarrow \mu$ a.e., then $\phi_{\mu_n} \rightarrow \phi_{\mu}$ uniformly on compact subsets. Consequently, for any fixed $z\in\mathbb{C}$, the map $L^\infty(\mathbb{C})\rightarrow\mathbb{C}$ given by $\mu\rightarrow\phi_\mu(z)$ is continuous.\end{thm}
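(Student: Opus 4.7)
The plan is to prove this via the Ahlfors--Bers integral-equation representation of solutions to the Beltrami equation. Fix a normalization, say $\phi_{\mu}(0)=0$, $\phi_{\mu}(1)=1$, $\phi_{\mu}(\infty)=\infty$. For $\mu$ of compact support, the solution admits the Cauchy-transform representation $\phi_{\mu}(z)=z+\mathcal{C}(h_{\mu})(z)$, where $h_{\mu}=\partial_{\bar{z}}\phi_{\mu}$ and $\mathcal{C}$ denotes the Cauchy transform. Substituting into the Beltrami equation and using that the Beurling transform $\mathcal{S}$ satisfies $\partial_z \mathcal{C}(f)=\mathcal{S}(f)$, one obtains the singular-integral equation
\begin{equation*}
(I-\mu\mathcal{S})h_{\mu}=\mu.
\end{equation*}
Since $\|\mathcal{S}\|_{L^p\to L^p}$ depends continuously on $p$ with $\|\mathcal{S}\|_{L^2\to L^2}=1$, one can choose $p>2$ close enough to $2$ that $\|\mu\|_{\infty}\|\mathcal{S}\|_{L^p}<1$, and the operator $I-\mu\mathcal{S}$ is invertible on $L^p$ via a Neumann series, yielding $h_{\mu}=(I-\mu\mathcal{S})^{-1}\mu$.

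The next step is to deduce $L^p$-convergence $h_{\mu_n}\to h_{\mu}$ from the hypothesis $\mu_n\to\mu$ a.e. We may assume $\|\mu_n\|_{\infty}\le k<1$ uniformly (discarding a tail otherwise; the theorem is vacuous without this). By dominated convergence applied to the compactly supported setting, $\mu_n\to\mu$ in $L^p$. Writing
\begin{equation*}
h_{\mu_n}-h_{\mu}=(I-\mu_n\mathcal{S})^{-1}(\mu_n-\mu)+\bigl[(I-\mu_n\mathcal{S})^{-1}-(I-\mu\mathcal{S})^{-1}\bigr]\mu,
\end{equation*}
the first term goes to zero in $L^p$ because $\|(I-\mu_n\mathcal{S})^{-1}\|_{L^p}$ is uniformly bounded (the Neumann estimate depends only on $k$ and $\|\mathcal{S}\|_{L^p}$). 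For the second term, use the resolvent identity $A^{-1}-B^{-1}=A^{-1}(B-A)B^{-1}$ with $A=I-\mu_n\mathcal{S}$ and $B=I-\mu\mathcal{S}$ to reduce to the convergence $(\mu_n-\mu)\mathcal{S}(h_{\mu})\to 0$ in $L^p$, which follows from dominated convergence since $\mathcal{S}(h_{\mu})\in L^p$ and $|\mu_n-\mu|\le 2$.

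Next, convert $L^p$-convergence of $h_{\mu_n}$ into uniform convergence of $\phi_{\mu_n}$ on compact subsets. The Cauchy transform of a compactly supported $L^p$ function ($p>2$) is $(1-2/p)$-H\"older continuous, with H\"older norm bounded by a constant times the $L^p$ norm; applying this to $h_{\mu_n}-h_{\mu}$ gives uniform convergence of $\phi_{\mu_n}$ on all of $\mathbb{C}$, in particular on compact sets.

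The main obstacle is extending to non-compactly supported $\mu$, since $h_{\mu}$ need not be in $L^p$ globally. The standard remedy is a truncation argument: write $\mu=\mu\chi_{|z|\le R}+\mu\chi_{|z|>R}$, factor $\phi_{\mu}=\phi_{\mu^{(2)}}\circ\phi_{\mu^{(1)}}$ using the composition rule for Beltrami coefficients, apply the compactly supported case to $\phi_{\mu^{(1)}}$, and use standard distortion estimates for normalized quasiconformal maps (equicontinuity from uniform bounds on $\|\mu\|_{\infty}$) to handle the tail contribution. Continuity of the evaluation map $\mu\mapsto\phi_{\mu}(z)$ from $L^{\infty}(\mathbb{C})$ to $\mathbb{C}$ then follows, since any sequence in $L^{\infty}$ converging in norm has an a.e.-convergent subsequence.
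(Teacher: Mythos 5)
The paper does not actually prove this statement --- it is quoted from the literature (Theorem 7.5 of Carleson--Gamelin, i.e.\ classical Ahlfors--Bers theory) --- so there is no internal proof to compare against; your argument is precisely the standard one. The compactly supported core is correct and complete: the integral equation $(I-\mu\mathcal{S})h_\mu=\mu$, the Neumann-series inversion on $L^p$ for $p>2$ near $2$, the resolvent identity reducing everything to $(\mu_n-\mu)\mathcal{S}h_\mu\to 0$ in $L^p$ (dominated convergence, using the uniform bound and common compact support), and the H\"older estimate for the Cauchy transform are all correctly assembled. Two small points: the representation $\phi_\mu(z)=z+\mathcal{C}h_\mu(z)$ corresponds to the normalization $\phi_\mu(z)=z+O(1/z)$ (which is in fact the one the paper uses), not the three-point normalization you announce; and the H\"older bound gives uniform convergence on compact sets only, not on all of $\mathbb{C}$, which is all you need. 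Also, the uniform bound $\sup_n\|\mu_n\|_\infty\le k<1$ is a genuine extra hypothesis, not something you can obtain by ``discarding a tail'': $\mu_n\to\mu$ a.e.\ with $\|\mu\|_\infty<1$ does not force $\|\mu_n\|_\infty$ to stay away from $1$ (take $\mu_n=(1-1/n)\chi_{E_n}$ with $E_n$ eventually avoiding each point). The intended statement, as in the cited source, includes this hypothesis, so this is a misstatement rather than a fatal error.

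The genuine gap is the final truncation step. Writing $\phi_{\mu_n}=\phi_{\nu_n}\circ\phi_{\lambda_n}$ with $\lambda_n=\mu_n\chi_{\{|z|\le R\}}$, the compactly supported case controls $\phi_{\lambda_n}$, but to control the outer factor you must show $\nu_n\to\nu$ a.e., where $\nu_n$ is the pushforward of $\mu_n\chi_{\{|z|>R\}}$ under $\phi_{\lambda_n}$, i.e.\ an expression of the form $(\mu_n/\theta_n)\circ\phi_{\lambda_n}^{-1}$. Pointwise convergence of $\phi_{\lambda_n}^{-1}$ does \emph{not} give a.e.\ convergence of $\mu_n\circ\phi_{\lambda_n}^{-1}$ when $\mu$ is merely measurable: already for fixed $\mu=\chi_E$ and inner maps that are small translations one only gets convergence in measure, not a.e.\ convergence of the full sequence. ``Standard distortion estimates'' and equicontinuity do not close this either; normality only yields subsequential locally uniform limits, which must still be identified. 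The two standard repairs are: (a) run the argument with convergence in measure (which compositions of this type do preserve), pass to a.e.-convergent subsequences, and conclude by the every-subsequence-has-a-further-subsequence trick; or (b) drop the factorization and argue directly that any locally uniform subsequential limit of the normalized $K$-quasiconformal maps $\phi_{\mu_n}$ must have Beltrami coefficient $\mu$ (the convergence theorem of Lehto--Virtanen/Bers for dilatations of locally uniformly convergent quasiconformal maps), hence equals $\phi_\mu$ by uniqueness. Either repair is routine, but as written your last paragraph is a pointer to a proof rather than a proof.
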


\noindent We remark that in the present work we will only need to consider a subclass of Beltrami coefficients $\mu$ which satisfy a strong \emph{thinness} condition near $\infty$, so that $\phi_{\mu}$ is asymptotically conformal at $\infty$ (see Section 3). For such maps one may normalize $\phi_{\mu}$ such that $\phi_\mu(z)=z+O(1/|z|)$ as $z\rightarrow\infty$, and this is the normalization we will always use in the present work. 

We leave open the following question arising naturally from the statement of Theorem \ref{main}, which asks whether it is necessary, in general, to consider perturbations of the sequence $S$:

\begin{question} Given any discrete planar sequence $S$ and some map $h:S\rightarrow S$, does there always exist a meromorphic $f$ so that $P(f)=S$, and $f|_S=h$?
  
\end{question}

\noindent A similar question was asked for finite $S$ and rational $f$ in \cite{DeMarco2018} (see Question 1.2 of \cite{DeMarco2018}):

\begin{question} Let $S\subset P^1(\overline{\mathbb{Q}})$ be a finite set. Is every map $h: S\rightarrow S$ realized by a rigid rational map $f: P(f) \rightarrow P(f)$ with $P(f)=S$?

\end{question} 

\noindent We also remark that an analogous version of Theorem \ref{main} holds for any infinite sequence in $\widehat{\mathbb{C}}$ with a unique accumulation point (not necessarily at $\infty$); in this case the $f$ produced in Theorem \ref{main} would have one essential singularity at this accumulation point (not necessarily $\infty$). Thus Theorem \ref{main} could be viewed as a statement that Theorem \ref{DKM} of \cite{DeMarco2018} remains true for infinite sets $S$ with a unique accumulation point, provided one is allowed to place an essential singularity of the function $f$ at that accumulation point. It seems plausible, moreover, that any dynamics on a sequence $S$ with $n$ accumulation points could be realized by the postsingular dynamics of a meromorphic function with $n$ essential singularities (one at each accumulation point), provided one allows for perturbations of $S$ as in Theorem \ref{main}. There are further generalizations to be made in this direction.

{\subsection*{Acknowledgements}
\addtocontents{toc}{\protect\setcounter{tocdepth}{1}}

The authors would like to thank the anonymous referees for their suggestions which led to an improved version of the paper. }




\section{Bounded geometry graphs}
\label{bounded geometry section}

In this  and the next section we review the quasiconformal 
folding  method of \cite{Bis15} for
constructing entire functions and adapt it to producing 
meromorphic functions.

Suppose $G$ is an unbounded, locally finite, connected  planar
graph. We say $G$ has  bounded geometry  if
\begin{enumerate}[{(1)}]
\item 
 The edges of $G$ are $C^2$ with uniform bounds.

 \item  The  union of edges meeting at a vertex $v$ 
are a $K$-bi-Lipschitz image of a ``star'' $\{z \in {\mathbb C}: 
0\leq  z^k  \leq r\}$  for some uniformly bounded $k,K$ ($r$ can 
be any positive, finite value; the star consists of $k$ equal 
length segments meeting at evenly spaced angles).

 \item  For any pair of non-adjacent edges $e$ and $f$,
 $\text{diam}(e)/\text{dist}(e,f)$ is uniformly bounded from above.
\end{enumerate}

The values for which these conditions hold are called the 
``bounded geometry constants'' of $G$.
We define a neighborhood $T_\gamma(r)$ of an arc $\gamma$
by 
$$ T_\gamma(r) = \{z\in\mathbb{C}: {\rm{dist}}(z,e) < r \cdot {\rm{diam}}(\gamma) \}, $$
and we define a neighborhood $T(r)$ of $G$ by taking
the union of these neighborhoods where $\gamma$ ranges 
over the edges of $G$.
This is a sort of Hausdorff neighborhood of $G$, but 
adapted to the local geometry of $G$ (the ``thickness'' 
of the neighborhood is proportional to the diameters of 
nearby edges).

 It is sometimes 
helpful to replace condition (1) by a stronger condition
that was introduced in \cite{MR3653246}: 
we say an arc $\gamma$ is $\varepsilon$-analytic if there is a conformal 
map on $T_\gamma(\varepsilon)$ that maps $\gamma$ to a line segment.
We say $G$ is uniformly analytic if it has bounded 
geometry and  every edge is $\varepsilon$-analytic 
for some fixed $\varepsilon >0$. 
Note that if we add extra vertices to the edges of a 
uniformly analytic graph  $G$ so as to  form a new bounded 
geometry tree, the new tree is also uniformly analytic with 
the same constant.
All the graphs constructed in this paper will be uniformly 
analytic.

Since $G$ is connected, the connected components 
of $\Omega := {\mathbb C} \setminus G =\cup_j \Omega_j$
 are simply connected.
We further assume that any bounded components are  disks
and the vertices of $G$ are evenly spaced on the boundary 
of each disk. We call these the D-components (for 
``disk components''). To apply the Folding Theorem, 
D-components  need only be 
bounded Jordan domains, but the special case of 
disks is all that we need here, and this extra 
assumption  will simplify the discussion.
Note that this assumption and bounded geometry imply 
that no two D-components touch each other. 
Moreover, we shall assume that every D-component 
contains an even number of vertices on its boundary; this 
is necessary because we will eventually map vertices
of $G$ to $\pm 1$ with edges mapping to top and 
bottom halves of the unit circle and  we will 
need to have an equal number of each type of edge 
and vertex on the boundary of the component.
For each D-component $\Omega_j$ let $\tau_j$ be a
complex-linear map to the unit disk, mapping the vertices of 
$G$ to the $2n^{\textrm{th}}$ roots of unity if there
are  $2n$ vertices on $\partial \Omega_j$. 

The unbounded 
components of $\Omega$ are called R-components (for ``right
half-plane components''). 
For each R-component $\Omega_j$, choose a conformal map $\tau_j$
from $\Omega_j$ to the right half-plane, ${\mathbb H}_r 
=\{ x+iy: x>0\}$, 
taking $\infty$ to $\infty$. We will denote by $\tau: \Omega\rightarrow\mathbb{C}$ the map defined as $\tau_j$ in each component $\Omega_j$ of $\Omega=\mathbb{C}\setminus G$.  We think of each edge 
in $G$ as having two sides, which may belong to the same 
or different complementary components of $G$. The map $\tau$ sends all the sides  belonging to 
a given unbounded component  $\Omega_j$ to intervals that 
partition the imaginary axis. The bounded geometry 
condition implies adjacent intervals
have uniformly 
comparable lengths; this is Lemma 4.1 of \cite{Bis15}.
We call such 
a partition of a line a ``quasisymmetric partition''.
The proof of this lemma given in \cite{Bis15} 
is just a sketch, so 
we  give a more detailed  version here.

\begin{lem}\label{Lemma_4.1} [Lemma 4.1, \cite{Bis15}]
Suppose notation is as above.
If $G$ is a bounded geometry graph, then $\tau$ maps 
sides of each unbounded complementary component $\Omega_j$ to a 
quasisymmetric partition of $\partial \rhp$.
\end{lem}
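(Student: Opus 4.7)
The plan is to fix two adjacent sides $e,f$ of $\partial\Omega_j$ meeting at a common vertex $v$ and prove $|\tau(e)|\asymp|\tau(f)|$ with constants depending only on the bounded-geometry constants of $G$. Condition $(2)$ at $v$ gives $\text{diam}(e)\asymp\text{diam}(f)=:r$ immediately, so the task is to push this Euclidean comparability through the conformal map $\tau$. I would begin by associating, to each side $s\subset\partial\Omega_j$, an interior ``Whitney-type'' reference point $z_s\in\Omega_j$ satisfying
\[
\text{dist}(z_s,s)\;\asymp\;\text{dist}(z_s,\partial\Omega_j)\;\asymp\;\text{diam}(s).
\]
Such a $z_s$ can be taken as the point at perpendicular Euclidean distance $c\cdot\text{diam}(s)$ from the midpoint of $s$ on the $\Omega_j$-side, for a small constant $c$ depending only on the bounded-geometry constants: condition $(3)$ keeps non-adjacent edges at distance $\gtrsim\text{diam}(s)$ from $s$, while condition $(2)$ at each endpoint of $s$ supplies enough angular separation that the edges sharing a vertex with $s$ also stay at distance $\gtrsim\text{diam}(s)$ from $z_s$. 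The proof then reduces to two comparabilities
\[
(\mathrm{I})\ |\tau(s)|\asymp\Re\tau(z_s),\qquad (\mathrm{II})\ \Re\tau(z_e)\asymp\Re\tau(z_f),
\]
from which the lemma follows by the chain $|\tau(e)|\asymp\Re\tau(z_e)\asymp\Re\tau(z_f)\asymp|\tau(f)|$.

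Estimate $(\mathrm{II})$ uses only hyperbolic geometry: inside the bi-Lipschitz sector model of $\Omega_j$ at $v$ furnished by condition $(2)$, join $z_e$ to $z_f$ by a Euclidean arc $\gamma\subset\Omega_j$ of length $\asymp r$ that stays at Euclidean distance $\asymp r$ from $\partial\Omega_j$ throughout. Since the hyperbolic density satisfies $\lambda_{\Omega_j}(z)\asymp\text{dist}(z,\partial\Omega_j)^{-1}$ by Koebe's quarter theorem, $\gamma$ has bounded hyperbolic length. Because $\tau$ is an isometry of hyperbolic metrics, $d_{\Hr}(\tau(z_e),\tau(z_f))\lesssim 1$, and the explicit density $\lambda_{\Hr}(w)=(2\Re w)^{-1}$ forces $\Re\tau(z_e)\asymp\Re\tau(z_f)$.

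Estimate $(\mathrm{I})$ has two pieces. The same hyperbolic-isometry identity combined with $\lambda_{\Omega_j}(z_s)\asymp\text{diam}(s)^{-1}$ yields $|\tau'(z_s)|\asymp\Re\tau(z_s)/\text{diam}(s)$, so it suffices to show $|\tau(s)|\asymp|\tau'(z_s)|\cdot\text{diam}(s)$. On the middle half of $s$, condition $(1)$ lets one straighten $s$ by a conformal chart (whose constants depend only on the uniform $C^2$ bounds) and Schwarz-reflect the composed map $\tau\circ(\text{chart})$ across the straightened segment, producing a conformal extension to a disk of radius $\asymp\text{diam}(s)$; Koebe distortion on this extension gives $|\tau'(z)|\asymp|\tau'(z_s)|$ on the middle of $s$, contributing $\asymp|\tau'(z_s)|\cdot\text{diam}(s)$ to the integral $\int_s|\tau'|\,|dz|=|\tau(s)|$. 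Near each endpoint $v'$ of $s$, with $\Omega_j$-opening angle $\alpha\in[\alpha_{\min},2\pi-\alpha_{\min}]$ (angle bounds from condition $(2)$), the Kellogg--Warschawski asymptotic $\tau(w)-\tau(v')\sim c(w-v')^{\pi/\alpha}$ shows that the near-vertex portion of the integral is $\lesssim\text{diam}(s)^{\pi/\alpha}\asymp|\tau'(z_s)|\cdot\text{diam}(s)$, completing $(\mathrm{I})$.

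I expect the main obstacle to be estimate $(\mathrm{I})$: both the conformal-straightening/Schwarz-reflection step on the bulk of $s$ and the Kellogg--Warschawski asymptotic near each vertex must be quantified so that the resulting constants depend only on the uniform $C^2$ bounds from condition $(1)$ and the angle bounds from condition $(2)$, with no dependence on the particular side $s$. In contrast, the reduction step and estimate $(\mathrm{II})$ use only Euclidean and combinatorial aspects of bounded geometry and present no genuine analytic subtleties.
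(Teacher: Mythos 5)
Your overall architecture (Whitney reference points $z_s$, the chain $|\tau(e)|\asymp\Re\tau(z_e)\asymp\Re\tau(z_f)\asymp|\tau(f)|$) is a recognizable alternative to what the paper does, and your reduction and estimate $(\mathrm{II})$ are sound. But estimate $(\mathrm{I})$, which you yourself identify as the crux, has genuine gaps as proposed. First, the Schwarz-reflection step is not available under the lemma's hypotheses: bounded geometry only gives $C^2$ edges, and a $C^2$ arc admits no conformal chart straightening it (that is exactly why the paper separately introduces the stronger notion of $\varepsilon$-analytic arcs); so ``$\tau\circ(\text{chart})$'' is not conformal and cannot be reflected. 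Second, the Kellogg--Warschawski step is not a valid comparison as written: in $\tau(w)-\tau(v')\sim c\,(w-v')^{\pi/\alpha}$ the constant $c$ carries all the scale information, and the claimed bound $\lesssim\diam(s)^{\pi/\alpha}\asymp|\tau'(z_s)|\cdot\diam(s)$ silently discards $c$; relating $c$ to $|\tau'(z_s)|$ uniformly over all sides and all components is essentially the content of the lemma, so the argument is circular there. (Also, at degree-one vertices the opening angle is exactly $2\pi$, outside your stated range $[\alpha_{\min},2\pi-\alpha_{\min}]$; this case, where $e$ and $f$ are the two sides of a single edge, is one the paper treats explicitly.) A repairable version of $(\mathrm{I})$ avoids boundary regularity altogether: show that $z_s$ sees $s$ and each of the two components of $\partial\Omega_j\setminus s$ with harmonic measure bounded below (a Brownian-motion/Beurling-type estimate using only conditions (2) and (3)); transporting to $\Hr$, this pins $\tau(z_s)$ over $\tau(s)$ and gives $|\tau(s)|\asymp\Re\tau(z_s)$ directly.

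For comparison, the paper's proof is shorter and more elementary. It uses the extremal-length characterization: two adjacent intervals $I,J$ on a line have comparable lengths iff the moduli of the two path families joining opposite sides of the associated quadrilaterals are bounded. By conformal invariance it then suffices to bound the modulus of the family of curves in $\Omega_j$ joining a side $e$ to the far boundary arc $g$. This is done with an explicit admissible metric $\rho=\varepsilon^{-1}\mathds{1}_{N(e,\varepsilon)}$: bounded geometry forces every such curve to have length at least $\varepsilon$ (non-adjacent edges are far away by condition (3); in the remaining case the curve must cross between the two endpoint balls of $e$), and condition (1) gives $\area(N(e,\varepsilon))=O(\varepsilon)$, so $\iint\rho^2=O(\varepsilon^{-1})$. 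No boundary smoothness of $\tau$ is used anywhere, which is why the paper's argument goes through under the bare bounded-geometry hypothesis.
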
 

\begin{proof}
We will use some simple facts involving 
conformal modulus, e.g., as discussed in  
Chapter IV of \cite{MR2150803}. We first note that if 
$I=[a,b]$ and  $J=[b,c]$ are adjacent intervals 
on the real line, then $I$ and $J$  have comparable lengths
if and only if the conformal moduli of the two  path 
families connecting opposite sides of the quadrilateral 
are bounded (connecting $I$ to $K=[c,\infty)$ and 
connecting $J$ to $L=(-\infty,a]$).

Suppose  $I$ and $J$ correspond to 
sides  $e,f$ of $\Omega_j$ (they might belong to two 
distinct  edges of $G$  or be the two sides of a single edge),
and let $g$ and $h$ be the parts of $\partial 
\Omega_j$ that correspond to the rays $K$ and $L$ respectively.
By rescaling we may assume $e$ has diameter $1$.
We claim 
that there is an $\varepsilon >0$ so that 
any path $\gamma$  connecting $e$ to $g$ inside $\Omega_j$
has length at least $\varepsilon $.  If $\gamma$ connects $e$ to a 
 non-adjacent edge $e'$  this follows immediately 
 from condition (3) in the definition of bounded 
 geometry. Otherwise, $e$, $f$ must be the two sides of a single edge, and $\gamma$ connects $e$ to a point of an 
adjacent edge $e'$  (possibly a point on the other side of $e$). 
If $\gamma$ leaves $N(e, \varepsilon)$, 
the $\varepsilon$-neighborhood of $e$, then 
it obviously has length at least $\varepsilon$.
Otherwise, $\gamma$ remains inside $N(e, \varepsilon)$.
Suppose $v,w$ are the endpoints of $e$.
By  the bounded geometry conditions 
there is a $C < \infty$ and  $\varepsilon>0$ 
so that $N(e, \varepsilon) \setminus (B(v,C\varepsilon)
\cup B(w,C\varepsilon))$ is disjoint from all edges of $G$ 
except $e$ itself.
With $\gamma$ as above, it must pass through one of these 
balls (where the graph has degree 1) and then hit the 
other ball before reaching $g$. Thus $\gamma$
must connect these two balls,
and hence it must  have diameter $\geq 1-O(\varepsilon) > \varepsilon$.

Now define a metric  by $\rho =1/\varepsilon $
on  $N(e, \varepsilon)$, 
the  $\varepsilon$-neighborhood 
of $e$, and zero elsewhere. Since any 
path connecting $e$ to $g$ inside $\Omega_j$
has length at least $\varepsilon$,  $\rho$ is admissible 
for this family.
On other hand, part (1) of  bounded geometry implies that 
$N(e,\delta)$ has area $O(\varepsilon )$, so  
$\iint \rho^2 dxdy = O( \varepsilon^{-1})$, a uniform
bound for the modulus of the path family that 
depends only on the bounded geometry constants.
The same argument applies to $f$ and $h$, proving the lemma.

\end{proof} 

\section{Quasiconformal folding and meromorphic functions}
\label{folding_first_section}

In order to state the Folding Theorem we need another 
assumption on the graph $G$:
 we will  assume that the $\tau$-image
of  every side of every R-component
has  length bounded 
below by $ \pi$; this is the so called 
``$\tau$-condition'' or ``$\tau$ lower bound''.
If there is a conformal map so that the images have 
lengths uniformly bounded away from zero, then by 
multiplying by a positive constant, we may assume the
lower bound is $ \pi$. Thus we usually only need to 
check that some lower bound holds.
For example, it is easy to check that a half-strip 
satisfies the $\tau$-condition  (for some choice 
of $\tau$) if the vertices are
evenly spaced, and even if the gaps between 
vertices decrease exponentially along the edges 
of the strip. Moreover,  this is 
essentially the only case that we will need
to consider in this paper.

This collection of conformal maps on R-components 
and $\mathbb{C}$-linear maps  on D-components defines 
a holomorphic map from $\Omega$ to the right half-plane. 
This map  need not be continuous across $G$, but the 
following result says that it can be modified in a 
neighborhood of $G$ so that it becomes continuous 
on the whole plane and is not far from holomorphic (it 
is quasi-regular). The following is a special case
of the result proven by the first author in \cite{Bis15}:

\begin{thm}[Folding Theorem] \label{QC folding thm}
Suppose notation and assumptions are as above.
Then there  are constants $r,K< \infty$ (that depend
only on the bounded geometry constants of $G$) and 
a graph $G'$  so that 
\begin{enumerate}
\item  $G'$ is obtained from $G$ by adding  a finite 
       number of finite trees 
       to the vertices of $G$ (the number added at any 
       vertex is at most the degree of that vertex).
\item 
       Each added tree is contained inside $T(r)$.
\item Each added tree  is contained in an R-component, except 
      for the vertex it shares with $G$. Therefore 
      each complementary component $ \Omega_j'$ of 
      $G'$  is contained in a complementary component $\Omega_j$
      of $G$ and this is a one-to-one correspondence.
      Note that $\Omega_j \setminus \Omega_j' \subset 
      \Omega_j \cap T(r)$.
\item For each R-component $ \Omega_j'$ of $G'$, there 
    is a $K$-quasiconformal map $\eta_j$ of $\Omega_j'$ 
     to the right half-plane
    that maps the sides of $\Omega_j'$ to intervals 
    of length $ \pi$ on the imaginary axis.
\item   On each side of an R-component  $  \Omega_j'$, 
    the map $\eta_j$  multiplies 
    arclength by a constant factor (which must 
    be $\pi$ divided by the length of that side).
\item For each  R-component $\Omega_j$,  $\eta_j = \tau_j$ 
      on $\Omega_j \setminus T(r)$. In particular, $\eta_j$ 
      is conformal off $T(r)$.
\end{enumerate}
\end{thm}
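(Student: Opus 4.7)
The plan is to follow the quasiconformal folding strategy from \cite{Bis15}. Fix an R-component $\Omega_j$ of $G$. By the $\tau$-condition, each side $e$ of $\Omega_j$ satisfies $|\tau_j(e)| \geq \pi$, and by Lemma \ref{Lemma_4.1} the collection $\{\tau_j(e)\}_e$ forms a quasisymmetric partition of $i\mathbb{R}$. The goal is to modify $G$ to a larger graph $G'$ (by attaching finite trees inside R-components) and replace $\tau_j$ by a quasiconformal map $\eta_j: \Omega_j' \to \mathbb{H}_r$ which sends every side of $\partial\Omega_j'$ to an interval of length exactly $\pi$ and which agrees with $\tau_j$ outside a controlled neighborhood of $G$.

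The first step is to choose, for each side $e$, an integer $n_e \geq 1$ and a defect $\ell_e \in [0,\pi)$ with $|\tau_j(e)| = n_e\pi + \ell_e$. The defect has to be absorbed by new edges. I would do this in the half-plane model: near the image on $i\mathbb{R}$ of each vertex of $G$, insert a short horizontal slit whose two sides, together with nearby defect pieces, subdivide the boundary into consecutive arcs of length exactly $\pi$. Pulling each slit back by $\tau_j^{-1}$ gives an arc in $\Omega_j$ emanating from the corresponding vertex $v$ of $G$, and I adopt these pullback arcs as the edges of the finite tree attached to $v$. Because the initial partition on $i\mathbb{R}$ is quasisymmetric with constants depending only on the bounded geometry of $G$, the slit sizes (and hence the number of tree edges added at any vertex, and their diameters relative to the adjacent edges of $G$) are uniformly controlled; so all added trees lie inside $T(r)$ for some $r$ depending only on the bounded geometry constants.

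The second step is to build $\eta_j$. Outside $T(r)$, set $\eta_j := \tau_j$, which is conformal. Inside $T(r)$, the map $\tau_j$ already carries $\Omega_j'$ conformally onto the half-plane-minus-slits, and what remains is to compose locally near each slit with an explicit quasiconformal ``unfolding'' that opens the slit into a subarc of $i\mathbb{R}$; a standard local model is $z \mapsto \sqrt{z^2+c}$ applied inside a disk around the slit and extended by the identity outside. The dilatation of each unfolding depends only on the aspect ratio of the slit relative to the surrounding boundary, which is bounded in terms of the bounded geometry constants of $G$. A final piecewise-affine reparameterization along each resulting subinterval of $i\mathbb{R}$ ensures that $\eta_j$ multiplies arclength by a constant on each side of $\Omega_j'$, as required by items (4) and (5).

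The main obstacle is uniformity: one must verify that the radius $r$, the quasiconformal constant $K$, and the bound on trees added per vertex all depend only on the bounded geometry constants of $G$. This is where Lemma \ref{Lemma_4.1} does the crucial work, as it controls the ratios between adjacent pieces of the initial partition and hence the geometry of every slit that needs to be inserted. A secondary technical point is to ensure that the quasiconformal supports around slits attached to neighboring vertices do not interfere; this is arranged by shrinking $r$ if necessary, using condition (3) of bounded geometry to keep the tubes $T_\gamma(r)$ around distinct edges of $G$ disjoint.
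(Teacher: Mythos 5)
First, a point of comparison: the paper itself does not prove this statement. It is quoted as a special case of the main theorem of \cite{Bis15}, so there is no internal proof to measure your argument against, and a genuinely complete proof would have to reproduce a substantial part of that paper. Judged on its own, your sketch has the right general shape (pass to the half-plane model, attach finite trees at vertices, open the resulting slits quasiconformally), but it contains a concrete gap at the normalization step. Attaching a tree with $k$ edges inside $\Omega_j$ creates exactly $2k$ new sides of $\Omega_j'$, and since every side must map to an interval of length exactly $\pi$, added trees can only account for \emph{integer multiples} of $\pi$ of boundary length. The fractional defects $\ell_e\in[0,\pi)$ therefore cannot be ``absorbed by nearby defect pieces'' attached to slits as you describe; they must first be removed by a quasisymmetric self-map of $i\mathbb{R}$ (extended quasiconformally into $\mathbb{H}_r$, conformal off the image of $T(r)$) that moves each partition point to an appropriate multiple of $\pi$. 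It is precisely the $\tau$-lower bound $|\tau_j(e)|\geq\pi$, combined with the comparability of adjacent intervals from Lemma \ref{Lemma_4.1}, that makes this preliminary adjustment uniformly quasisymmetric; your argument never invokes the lower bound, which signals that this step is missing.

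Second, the local model $z\mapsto\sqrt{z^2+c}$ for opening a slit is conformal with derivative blowing up at the tip of the slit, so it cannot by itself yield property (5); and the ``piecewise-affine reparameterization along each subinterval of $i\mathbb{R}$'' you invoke is only a boundary correspondence --- one must extend it quasiconformally into the half-plane, verify that the dilatation of the extension is bounded purely in terms of the bounded geometry constants, and check that the supports of the corrections at neighboring vertices can be localized inside the image of $T(r)$ so that they do not interfere and so that item (6) survives. These uniform estimates, together with the combinatorics of exactly which trees to attach when adjacent sides occupy different numbers of length-$\pi$ intervals, are where the real work of \cite{Bis15} lies. As it stands your proposal is a fair description of what the folding construction accomplishes, but not a proof of it.
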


We define a map $F$ on $\Omega' = {\mathbb C} \setminus G'
= \cup_j \Omega_j'$ 
by setting $F= \exp \circ \eta_j$ on each R-component $\Omega_j'$
and setting $F=\tau^n$ on a  D-component that has $2n$
vertices.   Because the only closed loops in $G$ are 
the boundaries of the D-components, and because we 
have assumed each of these contains an even number of 
vertices, it is easy to check that $G$ is bipartite 
and we choose a labeling of its vertices by $\pm 1$, 
so that adjacent vertices always have different labels.
By post-composing with a translation (for  R-components) 
 or a rotation  (for D-components) we can assume $F$ 
maps each vertex of $G$ to $\pm 1$, agreeing with its
label.

 Note that each new unbounded component $\Omega_j'$ 
lies inside one of the old R-components, and we will 
call these the new R-components 
(to distinguish them from the original R-components).
Note that $F$ extends continuously across any edge bounding both 
a D-component and a new R-component. This follows  since 
both maps send the edge to the same half of 
 the unit circle, with both  maps agreeing at the 
endpoints (which map to  $\pm 1 $), and both
maps  multiply 
arclength  by the same constant factor.

The same observation shows that for any point on 
an edge bounding 
two  new R-components (or an edge for which both sides belong 
to the same new R-component), the two possible images
under $F = \exp \circ \eta$ are conjugate points on 
the unit circle. To ``close the gap'', we  define 
a $3$-quasiconformal map $\sigma$  from $\{|z|>1\}$ to 
${\mathbb C}\setminus [-1,1]$ as follows. 
Use a M{\"o}bius transformation $\mu (z) = 
(z+1)/(z-1)$ to map $\{|z|>1\}$
to the right half-plane with $\{-1,+1\}$ mapping 
to $\{0,\infty\}$ (also note that $\mu$ is its 
own inverse). Consider the $3$-quasiconformal  map $\nu$ 
from the right half-plane to ${\mathbb C} \setminus (-\infty,0]$
 that is the 
identity on $\{|\arg(z)|\leq \pi/4\}$ and which triples
angles in the two remaining sectors. Post-composing with 
$\mu^{-1}$ gives the desired map $\sigma$.

\begin{figure}[htp]
\centerline{
\includegraphics[height=1.5in]{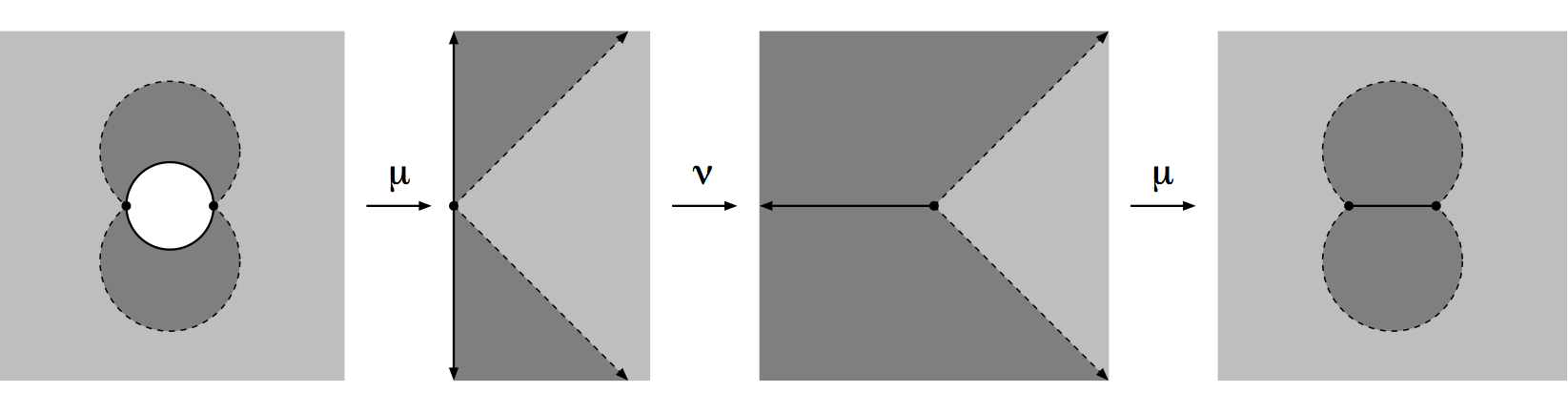}
}
  \caption{  The 3-quasiconformal map that sends the outside 
  of the unit  disk to the outside of $[-1,1]$ and identifies
  conjugate points on the circle. The map is conformal (indeed, 
  is the identity) on the light gray region.
  }
  \label{geodesic fig}
\end{figure}

We gave the definition  in sectors  so that $\sigma$
is conformal off a bounded neighborhood of $\{|z|=1\}$
(it would have  been easier to define a 2-quasiconformal 
map with the same boundary values, but non-conformal 
in the whole plane). Thus applying $\sigma$ will keep our 
map holomorphic outside $T(r_1)$ if $r_1$ is large enough.

Note that $\sigma$ maps two conjugate points on the 
unit circle to the same point of $[-1,1]$, so 
$\sigma \circ F$ will extend continuously across 
the edge we are considering. Here $\sigma \circ F$ is applied 
on the components of the $F$-pre-image of 
${\mathbb C}\setminus (\{|z|\leq 1\} \cup[1,\infty))$ that contain 
the relevant edges of $G$ on their boundary, where we note that $\sigma(z)=z$ for $z\in[1,\infty)$.




Finally,  given $ w \in {\mathbb D} = \{|z|<1\}$, we can 
follow $F$ on a  D-component 
by a quasiconformal map $\rho:{\mathbb D} 
\to {\mathbb D} $ so that $\rho(0)=w$,  $\rho$ is the 
identity on $\{|z|=1\}$, and $\rho$ is conformal 
on $\{|z|< 1/2\}$. The QC constant depends only on 
$|w|$ and blows up as $|w|\nearrow 1$.  Then $\rho
\circ F$ is uniformly quasiconformal (if $|w|$ is 
uniformly bounded away from $1$), and has dilatation 
supported in $T(r)$ for a uniformly bounded $r$ 
(the maximum of $r$ from Theorem \ref{QC folding thm}
 and $r_1$ above). 
 The choice of $w$ can be different 
for each D-component, but $|w|$ must be uniformly bounded
below $1$ to get a uniform quasiconformal estimate. Thus 
we have:

\begin{cor}
Suppose notation is as above. Given $0 < s< 1$ 
 there are $r< \infty$ (depending only on the bounded
geometry constants of $G$),  $K < \infty$ (depending 
on $s$ and the bounded geometry constants of $G$) and 
a $K$-quasiregular $g$ on ${\mathbb C}$ so that 
\begin{enumerate}
\item 
$g=\exp \circ \tau$ off $T(r)$.
\item 
The center of any D-component with $2n$ boundary 
vertices  is a critical point of order $n$  and 
each critical value can be specified in 
$\{|z|<s \}$.
\item 
 The only other  singular values 
of $g$ are $\pm 1$, and the corresponding 
critical points occur at vertices of $G'$.
\item 
The only asymptotic value is $\infty$, taken in 
the R-components.
\end{enumerate}
\end{cor}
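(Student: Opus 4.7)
The plan is to synthesize the three constructions already introduced in the section. First, I would apply Theorem~\ref{QC folding thm} to $G$ to obtain the folded graph $G'$ and the quasiconformal maps $\eta_j:\Omega_j'\to\rhp$ on each new R-component. On $\mathbb{C}\setminus G'$ I define a preliminary map $F$ by $F:=\exp\circ\eta_j$ on each new R-component and $F:=\tau_j^n$ on each D-component with $2n$ boundary vertices, then check continuity across the edges of $G'$.

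Continuity at an edge separating a D-component from a new R-component follows from Folding Theorem property (5) together with the normalization of $\tau_j$ on D-components: both definitions send the edge onto the same half of $\partial\mathbb{D}$ at the same constant speed, with matching $\pm 1$-labeled endpoints. For an edge whose two sides both belong to new R-components, however, the one-sided limits of $F$ are the two conjugate points on $\partial\mathbb{D}$, so I would post-compose with the $3$-quasiconformal map $\sigma$ constructed above; this identifies conjugate boundary points and lands the image in $\mathbb{C}\setminus[-1,1]$. Since $\sigma$ is conformal outside a bounded neighborhood of $\partial\mathbb{D}$, its contribution to the dilatation is supported in $T(r_1)$ for a uniformly bounded $r_1$. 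Finally, to realize the freedom in the critical values, on each D-component I further post-compose with the QC self-map $\rho$ of $\mathbb{D}$ sending $0$ to the prescribed $w\in\{|z|<s\}$, equal to the identity on $\partial\mathbb{D}$ and conformal on $\{|z|<1/2\}$; its dilatation depends only on $s$, and its $\tau_j^{-n}$-support sits in a $T(r_2)$-neighborhood of $\partial\Omega_j'$ for a uniform $r_2$, by our assumption that D-components are disks with evenly spaced vertices.

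Calling the resulting composition $g$, it is then $K$-quasiregular with $K$ depending only on $s$ and the bounded geometry constants, with dilatation supported in $T(r)$ for $r:=\max(r_{\text{Folding}},r_1,r_2)$. The four conclusions follow by bookkeeping: property (1) holds because $\eta_j=\tau_j$ outside $T(r)$ by Folding Theorem property (6) and both $\sigma$ and $\rho$ act as the identity on the relevant regions there; property (2) is immediate from the factor $\tau_j^n$ and the post-composition with $\rho$ placing the critical value at the prescribed $w$; property (3) follows by noting that $\exp\circ\eta_j$ has no interior critical points on R-components, so the remaining critical points of $g$ arise at vertices of $G'$ where several edges meet, mapping via the local $z\mapsto z^k$ behavior to $\pm 1$; and property (4) reflects that the only asymptotic value of $\exp$ on $\rhp$ is $\infty$, taken along ends of R-components. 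The main obstacle is organizational rather than conceptual: verifying that the three distinct sources of dilatation—folding, $\sigma$, and $\rho$—can be simultaneously confined to a single uniform $T(r)$-neighborhood of $G'$, and that no new singular values are inadvertently produced by the corrections $\sigma$ and $\rho$ (which is where the explicit description of $\sigma$ as a homeomorphism and of $\rho$ as a QC disk self-homeomorphism is used).
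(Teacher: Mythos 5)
Your proposal is correct and follows essentially the same route as the paper, which proves this corollary precisely by the discussion preceding it: apply the Folding Theorem, set $F=\exp\circ\eta_j$ on new R-components and $\tau^n$ on D-components, verify continuity across D--R edges via the matching arclength factors and $\pm1$ endpoint labels, close the gap on R--R edges with the $3$-quasiconformal map $\sigma$, and post-compose with $\rho$ on D-components to place the critical values in $\{|z|<s\}$. Your bookkeeping of the three dilatation supports (folding, $\sigma$, $\rho$) into a single $T(r)$ matches the paper's remark that one takes the maximum of the folding constant $r$ and $r_1$.
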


We now adapt the above result to give meromorphic 
functions. Suppose we have a graph $G$, as above, 
but now the bounded  components (which we still 
assume are disks) are labeled either as D-components
or ID-components (ID for ``inverted disk'') and the 
unbounded components are labeled either as R-components 
or IR-components (IR for ``inverted R-component''). We emphasize that the new terminology ID-component (respectively, IR-component) is introduced only so as to allow a binary labelling of bounded (respectively, unbounded) components of $\mathbb{C}\setminus G$. This enables us, in what follows, to define a quasiregular function $H$ in $\mathbb{C}\setminus G'$ so that the definition of $H$ in a given component of $\mathbb{C}\setminus G'$ depends on whether that component has been labelled ``inverted'' or not. We assume that we are given such a labelling so that: 



\begin{itemize}
\item[(i)] D-components share edges only with R-components,
\item[(ii)]  ID-components share edges only with IR-components,
\item[(iii)] R-components may share edges with D, R or IR-components,
\item[(iv)] IR-components may share edges with ID, R or IR-components.
\end{itemize}

Apply the Folding Theorem to this graph with ID-components
momentarily
considered as D-components and IR-components considered
as R-components. Obtain the graph extension $G'$ of 
$G$ and the corresponding subdomains of the unbounded 
components. Each of these  is a subset of a R-component
or IR-component and they will be called the new R-components
and new IR-components.

Next define a function $H$   to be equal to  $F$ 
on the D and new R-components, and only use the QC-map 
$\sigma$ to modify $F$ on edges  with both sides 
belonging to  new R-components (possibly the same component). 
On the ID and new IR-components we set $H=1/F$, where we only 
use $\sigma$ to modify $F$  on edges with both sides
belonging to new IR-components (possibly the same). Note that this 
creates poles, but considered as a map into the sphere
$H$ is continuous across all edges, except possibly 
those shared by a new R-component
 and new IR-component. However,  for a point 
on such an edge, the two possible images of  $F$ are conjugate
points on the unit circle and since $1/z = \overline{z}$ on 
the unit circle, $H$ also extends continuously across such edges.
 
\begin{thm}\label{folding_with_IR}
With the assumptions above, and taking $0<s< 1$, 
 there are $r, K< \infty$ (depending 
only on the bounded geometry constants of $G$; $K$ also 
may depend on $s$) and 
a $K$-quasiregular map $g: {\mathbb C} \to \widehat {\mathbb C}$
that equals $H$ off $T(r)$. Moreover, 
\begin{enumerate}
\item 
Each IR-component contains a curve tending to $\infty$ 
along which $g$ tends to zero; thus each such 
component contributes an asymptotic value of $0$, which may be perturbed with the map $\rho$.
\item 
There are   $n$ poles 
(counted with multiplicity) in each 
ID-component that  has $2n$ vertices on  its boundary.
\item 
The  critical values corresponding to D-components
may be specified  independently in $\{|w|< s\}$
 and the  critical values 
corresponding to  ID-components may be specified 
independently in $\{|w|> 1/s\}$.  
\end{enumerate}
\end{thm}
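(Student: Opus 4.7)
The plan is to adapt the construction from the corollary following Theorem \ref{QC folding thm} by inverting $F$ on the components labeled ``inverted''. First I would apply the Folding Theorem to $G$, momentarily regarding ID-components as D-components and IR-components as R-components. This yields the extended graph $G'$, the quasiconformal maps $\eta_j$ on the new unbounded components of $\mathbb{C}\setminus G'$, and the auxiliary map $F$ defined by $F = \exp\circ\eta_j$ on new R/IR-components and $F = \tau^n$ on bounded components with $2n$ boundary vertices.

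I would then define a candidate $H$ on $\mathbb{C}\setminus G'$ by setting $H = F$ on D-components and new R-components and $H = 1/F$ on ID-components and new IR-components; near R--R edges I would additionally compose with the QC-map $\sigma$ as in the entire case (so $H = \sigma\circ F$ there), and near IR--IR edges I would set $H = 1/(\sigma\circ F)$. The crucial step is to verify that $H$ extends continuously across every edge of $G'$ as a map into $\widehat{\mathbb{C}}$. Under the compatibility conditions (i)--(iv), every edge falls into one of four types: (a) D--R edges, handled by the original Folding Theorem argument; (b) ID--IR edges, handled by the same argument since taking reciprocals preserves continuity as maps into $\widehat{\mathbb{C}}$; (c) R--R or IR--IR edges, where $F$ takes conjugate boundary values $e^{i\theta},e^{-i\theta}$ on the unit circle and $\sigma$ identifies them as in the entire case (with an additional $1/z$ on both sides in the IR--IR subcase); and (d) R--IR edges, where the new observation is that $F$ again has conjugate boundary values $e^{i\theta},e^{-i\theta}$, so the R-side value $H = F = e^{i\theta}$ coincides with the IR-side value $H = 1/F = 1/e^{-i\theta} = e^{i\theta}$ automatically, without needing $\sigma$.

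With continuity of $H$ established, I would produce $g$ by post-composing with a $\rho$-map on each bounded component to prescribe critical values. On a D-component, the same $\rho$-map as in the entire case specifies any target critical value $w\in\{|w|<s\}$. On an ID-component, $H$ behaves near the center like $1/z^n$ (after the linear normalization by $\tau$), so its critical value is $1/\rho(0)$; to realize a prescribed critical value $w'\in\{|w|>1/s\}$ I would choose $\rho(0) = 1/w'\in\{|z|<s\}$, keeping the dilatation of $\rho$ uniformly controlled. The remaining conclusions then follow directly: on each new IR-component a curve along which $\eta_j\to+\infty$ yields $F\to\infty$ and hence $H = 1/F\to 0$, giving the asymptotic value $0$; and an ID-component with $2n$ boundary vertices contributes a pole of order $n$ via the local $1/z^n$ model.

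The main obstacle I expect is not any single estimate but the bookkeeping: one must confirm that the labeling conditions (i)--(iv) guarantee every edge of $G'$ belongs to exactly one of the four types above, that the $\sigma$-correction interacts correctly with the inversion across IR--IR edges, and that the resulting $g$ is uniformly $K$-quasiregular with dilatation supported in $T(r)$ for uniform $r,K$ depending only on the bounded geometry constants of $G$ (and on $s$ through the $\rho$-maps).
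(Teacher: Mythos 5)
Your proposal follows essentially the same route as the paper: apply the Folding Theorem with the inverted components temporarily relabeled, set $H=F$ on D/new-R components and $H=1/F$ on ID/new-IR components, use $\sigma$ only on R--R and IR--IR edges, and observe that across R--IR edges continuity into $\widehat{\mathbb{C}}$ is automatic because the two boundary values of $F$ are conjugate points on the unit circle and $1/z=\overline{z}$ there. Your handling of the $\rho$-maps (choosing $\rho(0)=1/w'$ for ID-components), the poles of order $n$, and the asymptotic value $0$ along curves where $\operatorname{Re}\eta_j\to+\infty$ likewise matches the paper's argument.
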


\section{Constructing the Graph}\label{Graph_I}

In this Section we build the graph $G$ that we 
use in the proof of Theorem \ref{main}.

\begin{lem} \label{geodesic lemma}
Given $\delta >0$ and  an infinite, discrete set of points $\{z_n\}$
in the plane, 
we can construct an unbounded Jordan domain $W$  
so that 
\begin{enumerate}[{(1)}]
\item $\{z_n\} \subset W$. 
\item The points $\{z_n\}$  are all at least unit distance apart in the 
    hyperbolic metric for $W$. 
\item Every point of $\{ z_n\}$ lies within a uniformly bounded
   hyperbolic  distance 
   of some fixed hyperbolic geodesic,  $\gamma$, for $W$ that 
    connects some finite boundary point $x$ of $W$ to $\infty$.
\item  $\area(W)< \delta $ and for all $n \in \N$,
	${\rm{area}}(W \cap \{|z|>n\}) \leq \delta \exp(-n)$.
\item Every point of the plane lies within distance $1$ of $W$.
\item We can add vertices to $\partial W$ to make it into a 
uniformly analytic tree  with uniformly bounded constants. 
\item 
Each edge  $J_j$  of this tree 
is on the boundary of a region $R_j \subset W$ 
so that $\area(R_j) \simeq \diam(J_j)^2$ and 
the $\{R_j\}$ are pairwise disjoint.
\item For each edge $J_j$ of this tree, the path distance 
 in $W$ from $J_j$ to the arc of $\partial W \setminus x$
($x$ is as in part (3))
that is disjoint from $J_j$ is comparable to $\diam(J_j)$.
\end{enumerate} 
\end{lem}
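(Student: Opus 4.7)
The plan is to construct $W$ as a smooth, thin tubular neighborhood of a Jordan arc $\Gamma$ starting at a chosen base point $x$ and running out to $\infty$, where $\Gamma$ is designed to both thread through all the points $z_n$ and be coarsely dense in the plane. To build the spine $\Gamma$, I would first order the sequence so that $|z_n|$ is non-decreasing and choose $x$ near the origin, then inductively build a $C^2$ Jordan arc $\Gamma:[0,\infty)\to\mathbb{C}$ with $\Gamma(0)=x$ and $|\Gamma(s)|\to\infty$ by proceeding through annular stages: in stage $k$, I extend the current arc so that it threads through every $z_n$ lying in $\{k-1\le|z|\le k\}$ and through every node of a fixed $\tfrac{1}{2}$-dense grid in that annulus, then continue into stage $k+1$. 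Standard smoothing of a polygonal path yields such a $\Gamma$ with uniform curvature bounds, and the arclength $L(k)$ through stage $k$ grows only polynomially in $k$.

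Next I would choose a smooth positive width function $w:[0,\infty)\to(0,\infty)$ with $w(0)=0$ and $w(s)\to 0$ as $s\to\infty$, satisfying (a) $w(s)$ is much smaller than the Euclidean distance from $\Gamma(s)$ to any non-adjacent piece of $\Gamma$, so that the open normal tubular neighborhood $W$ of half-width $w(s)/2$ is an embedded Jordan domain; (b) $w(s)\le\tfrac{1}{2}|z_n-z_{n+1}|$ whenever $\Gamma(s)$ lies on the sub-arc of $\Gamma$ between consecutive indexed points $z_n$ and $z_{n+1}$; and (c) $w(s)$ decays rapidly enough as $|\Gamma(s)|\to\infty$ to force the area bounds in (4). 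In the resulting thin tube $W$, the hyperbolic density satisfies $\lambda_W(z)\asymp 1/w(s)$ at a point near $\Gamma(s)$, so the hyperbolic distance between consecutive $z_n$ and $z_{n+1}$ along $\Gamma$ is comparable to $|z_n-z_{n+1}|/w\ge 1$, giving (2); distances between non-consecutive indexed points are only larger, so (2) holds pairwise. A standard comparison with model strips shows that any hyperbolic geodesic $\gamma$ in $W$ from the finite boundary point $x=\Gamma(0)$ to $\infty$ tracks the spine $\Gamma$ up to uniformly bounded hyperbolic distance, so each $z_n\in\Gamma$ lies at uniformly bounded hyperbolic distance from $\gamma$, giving (3). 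Properties (1), (4), (5) are immediate from the construction.

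Finally, the boundary $\partial W$ is a bi-infinite Jordan arc through $x$, which I view as a tree by subdividing it into edges $J_j$ of Euclidean length $\asymp w(s_j)$, where $s_j$ is the arclength of the nearest point of $\Gamma$. Since $w$ is smooth and $\Gamma$ is $C^2$ with uniform curvature at the relevant scale, rescaling each edge by $\diam(J_j)$ yields a uniformly smooth arc, so the subdivided $\partial W$ is a uniformly analytic tree with uniformly bounded constants, giving (6). For (7), I take $R_j$ to be a ``box'' of dimensions $\asymp w(s_j)\times w(s_j)$ inside $W$ abutting $J_j$; the $\{R_j\}$ are pairwise disjoint and satisfy $\area(R_j)\asymp\diam(J_j)^2$. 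For (8), the path distance in $W$ from $J_j$ to the opposite side of $\partial W$ is comparable to the local tube width $w(s_j)\asymp\diam(J_j)$.

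The hard part will be Step 1: producing a single $C^2$ Jordan arc that passes through every $z_n$ in the prescribed order and is coarsely dense in the plane, while keeping the arclength growth controlled enough that the width $w(s)$ in the second step can decay rapidly (for (4)) and still dominate the local Euclidean spacing requirement (b) at any cluster of nearby indexed points. A secondary concern is ensuring that this construction does not produce any self-approaches of $\Gamma$ which would destroy the uniform bounded geometry needed in the final step; this should be avoidable by building in enough ``clearance'' during each inductive stage before fixing the width function $w$.
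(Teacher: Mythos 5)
Your construction is correct in outline, but it takes a genuinely different route from the paper. The paper builds $W$ as a chain of small round disks $D_n$ centered at the points $z_n$, joined in order by thin polygonal corridors that enter and leave each disk at antipodal boundary points; you instead build a single thin tube whose spine threads through all the $z_n$ (and a coarsely dense grid), with the points lying on the spine itself. Both constructions verify (2) and (3) by the same underlying mechanisms: hyperbolic separation comes from forcing any connecting path to cross a thin sub-tube (in the paper, the corridor between two disks; for you, the sub-tube between consecutive spine points, which is why your condition (b) is the right normalization), and (3) comes from showing each $z_n$ has harmonic measure bounded below for both components of $\partial W\setminus\{x\}$, which for a spine point follows from a local rectangle estimate exactly as the paper's follows from a local disk estimate. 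Your version makes (8) essentially automatic (every edge sees the opposite wall of the tube at distance comparable to the local width), whereas the paper must additionally arrange that corridor openings are comparable to the disk they enter so that only boundedly many vertices sit on each disk boundary. What the paper's disks buy is downstream convenience: they become the D-components of the graph in Section 4, with $z_n$ at the exact center; in your construction one would instead place a small disk of radius comparable to $w(s_n)$ around each spine point inside the tube, which works equally well since only properties (2) and (3) are used there.

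Two small corrections. First, the difficulty you flag at the end is illusory: every constraint you place on $w$ --- conditions (a), (b), the area bound (4), and the separation estimate (2) --- is an \emph{upper} bound, so there is no tension between rapid decay of $w$ and condition (b), and no need for the (unjustified, and in general false, since an annulus may contain arbitrarily many $z_n$) claim that $L(k)$ grows polynomially; all that matters is that each stage contributes finite arclength, so $w$ can be chosen small enough stage by stage. Second, the boxes $R_j$ must have depth a definite small fraction of the local width $w(s_j)$, since otherwise the boxes attached to edges on opposite walls of the tube at the same spine location would overlap; this costs only a constant in $\area(R_j)\simeq \diam(J_j)^2$. Also take $w(s)\sim cs$ near $s=0$ rather than flatter, so that the two boundary arcs meet at $x$ in a definite angle and the bi-Lipschitz star condition holds at that vertex.
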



\begin{proof} 
The proof is simple and  we only 
sketch the construction, leaving some  details 
for the reader. The main idea is illustrated 
in  Figure \ref{geodesic fig}: we take $W$ to be the union of small, disjoint 
disks $D_n$ centered at the points $\{z_n\}$ together with thin polygonal tubes connecting the 
disks, in order, which 
leave each disk at antipodal points of the boundary circle.
If the connecting tubes are thin compared to the disks, 
then the points $\{z_n\}$ are far apart in the hyperbolic 
metric, so (2) holds. We now show that (3) follows by imposing an upper bound on the relative entering width of the connecting tubes (described above). Let $x\in\partial W$, $\gamma$ a geodesic in $W$ connecting $x$ to $\infty$, and denote harmonic measure by $\omega$. Let $I_1, I_2$ denote the two components of $\partial W\setminus\{x\}$. Property (3) will follow if we can show that \[ \omega(z_n, I_1, W) > c \textrm{ and } \omega(z_n, I_2, W) > c \] for all $n$ and some $c>0$ independent of $n$. By monotonicity properties of harmonic measure we have that \[ \omega(z_n, I_1, W) \geq \omega(z_n, I_1\cap\partial D_n, D_n), \] and $I_1\cap\partial D_n$ contains a circular arc of harmonic measure (in $D_n$) uniformly bounded away from zero given an upper bound on the relative entering width of the connecting tubes. Similar considerations yield the lower bound for $\omega(z_n, I_2, W)$. 


\begin{figure}[htp]
\centerline{
\includegraphics[height=2.0in]{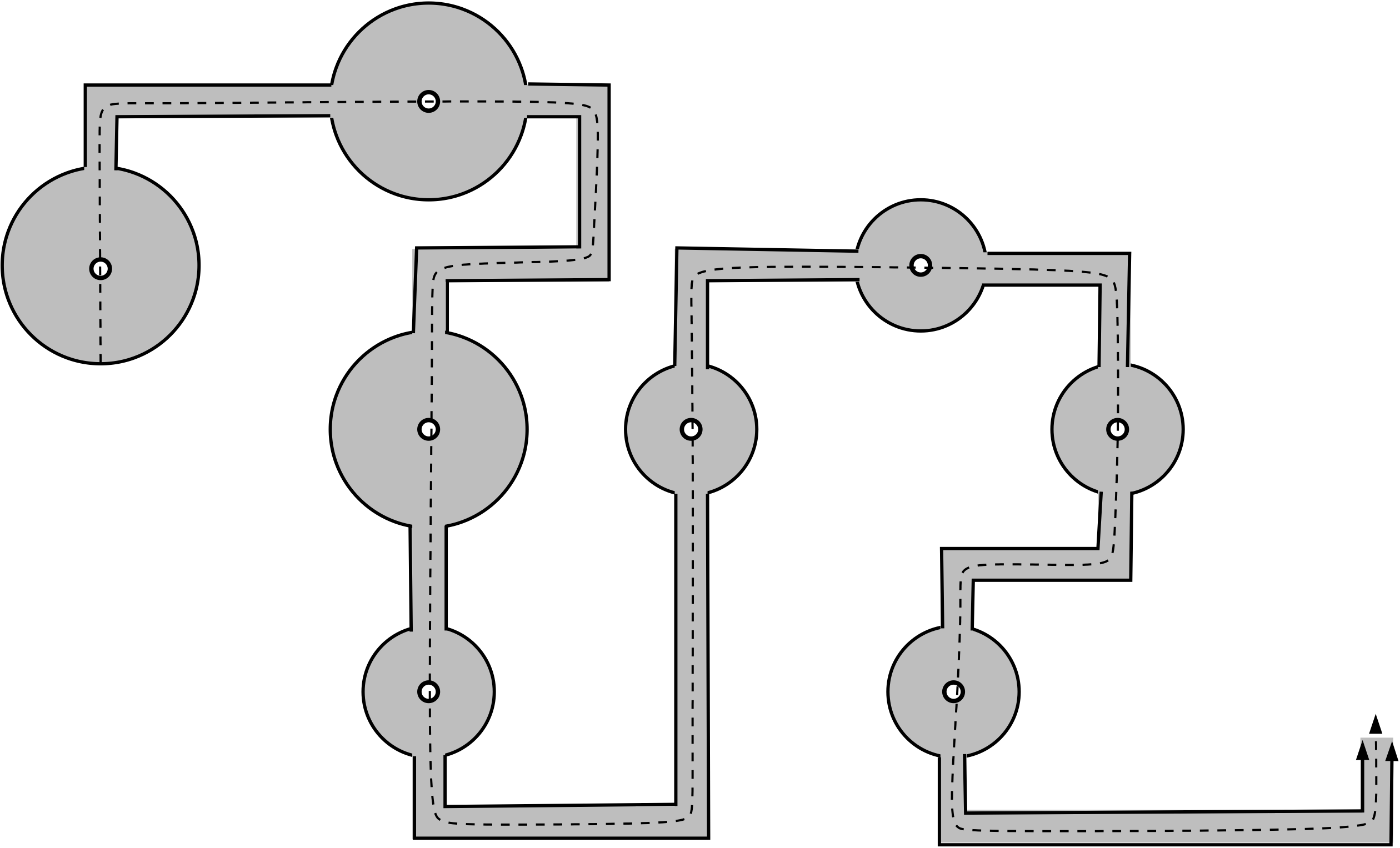}
}
  \caption{  By using small disks around each point 
  and thin corridors that enter and leave on opposite
  sides of the disks, we can build a Jordan domain 
  that satisfies Lemma \ref{geodesic lemma}.
  }
  \label{geodesic fig}
\end{figure}
Part (4) can be obtained simply  by taking the tubes and disks
in the construction small enough.  
To get (5), we can add points to $\{z_n\}$ until this 
set is $1$-dense in the plane, e.g., add any point of 
$\Z + i \Z$ that does not already have a 
point of $\{z_n\}$ within distance $1/10$ of it.
(6)  is also easy to verify:
on the tubes,  take approximately 
 evenly spaced points  where the spacing is 
comparable to the width, and partition the circles 
in a way that interpolates between the sizes of
the two tube openings. If we take a disk centered 
at the midpoint of $J_j$, whose radius is a small 
multiple of $\diam(J_j)$ (depending only on the 
bounded geometry constants), then the intersection $R_j$ of 
this disk with $W$ satisfies (7).
See Figure \ref{TubeDetail}.  We can vary the width of 
a tube (also illustrated in Figure \ref{TubeDetail}) so 
that all the previous conditions still hold, and 
the width of a tube when it enters and leaves
a disk is always comparable to the width of that 
disk; thus only a uniformly bounded number (independent 
of the disk)  of vertices is
needed on the boundary of each disk and this implies 
(8) holds.


\end{proof}

\begin{figure}[htp]
\centerline{
\includegraphics[height=1.5in]{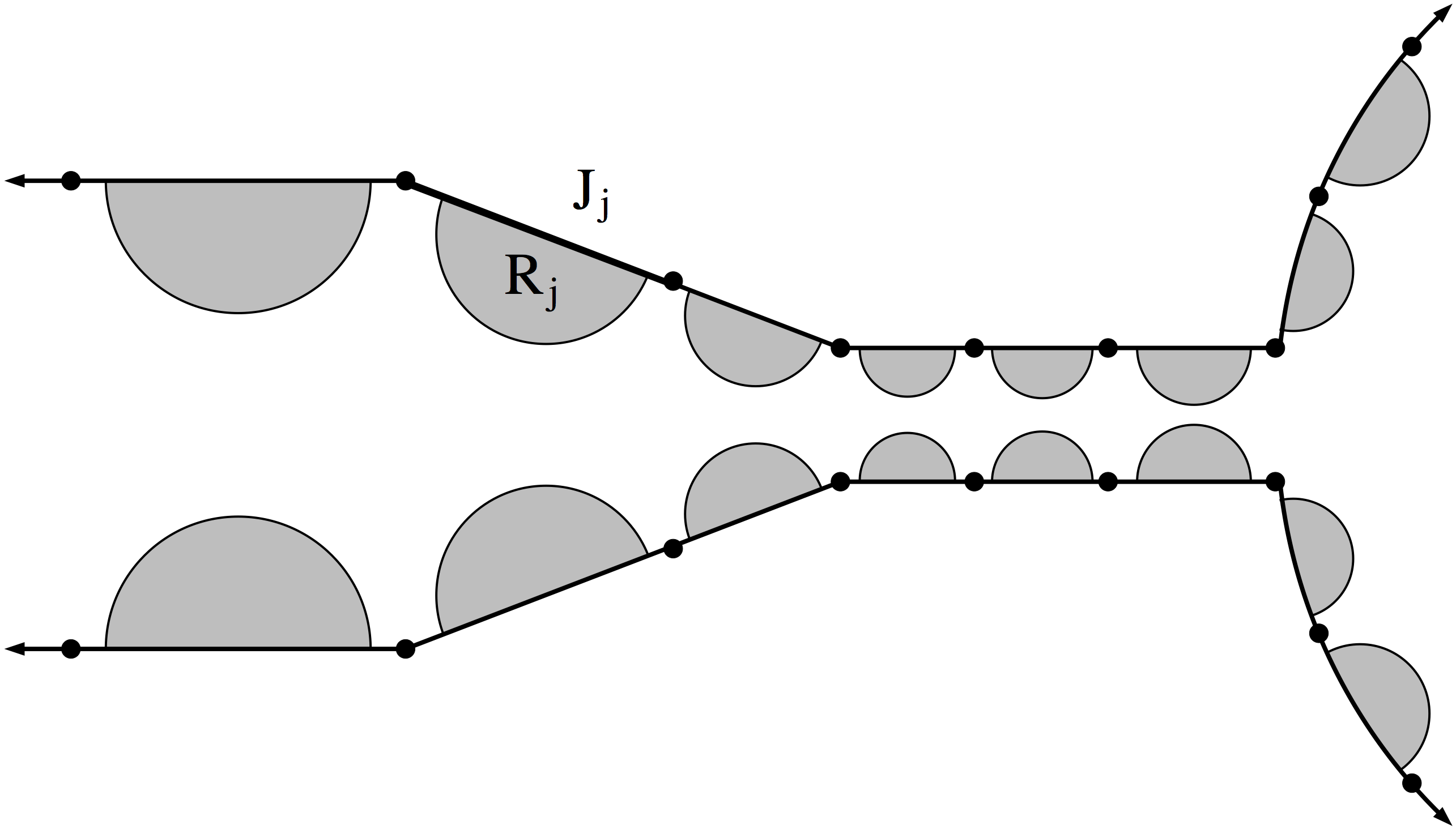}
}
  \caption{   A detail of the tube showing how the
	tube width can vary, and how to associate 
	a region $R_j \subset W$ to each side $J_j$ of $W$
	so that $\area(R_j) \simeq \diam(J_j)^2$.
  }
  \label{TubeDetail}
\end{figure}

If we conformally map $W$ to the upper half-plane, 
we can arrange for the geodesic $\gamma$ of Lemma \ref{geodesic lemma} to map 
to the positive imaginary axis (by mapping the boundary 
point $x$ in the  lemma to the origin;  post-composing 
any  conformal map from $W$ to the half-plane taking 
$\infty$ to $\infty$ 
with a translation will accomplish this). Then the 
points $\{z_n\}$ map to points in a vertical cone 
with its vertex at the origin. See Figure \ref{half-plane 1}.
Moreover, we can rescale so that the first point has 
height $1$ above the real axis.

A small disk in $W$ around each $z_n$ (say with radius 
one tenth the distance to the boundary)  will map to 
a near-circular region in the upper half-space 
and it is easy to connect the near-disks to each 
other and to $\infty$  to form a bounded 
geometry graph $\tilde{G}$ as shown in Figure \ref{half-plane 1}. We note that,
 instead of a bounded component containing the image of $z_n$, we
may also place a vertex at the image of $z_n$,
also as shown
in Figure \ref{half-plane 1}. 
The unbounded components are all approximately horizontal 
half-strips, and it is easy to verify the $\tau$-condition 
for them. 
Note that given any labeling of the bounded components 
by D or ID, we can easily label the unbounded 
components with R or IR to satisfy the necessary 
adjacency restrictions.

\begin{figure}[htp]
\centerline{
\includegraphics[height=2.5in]{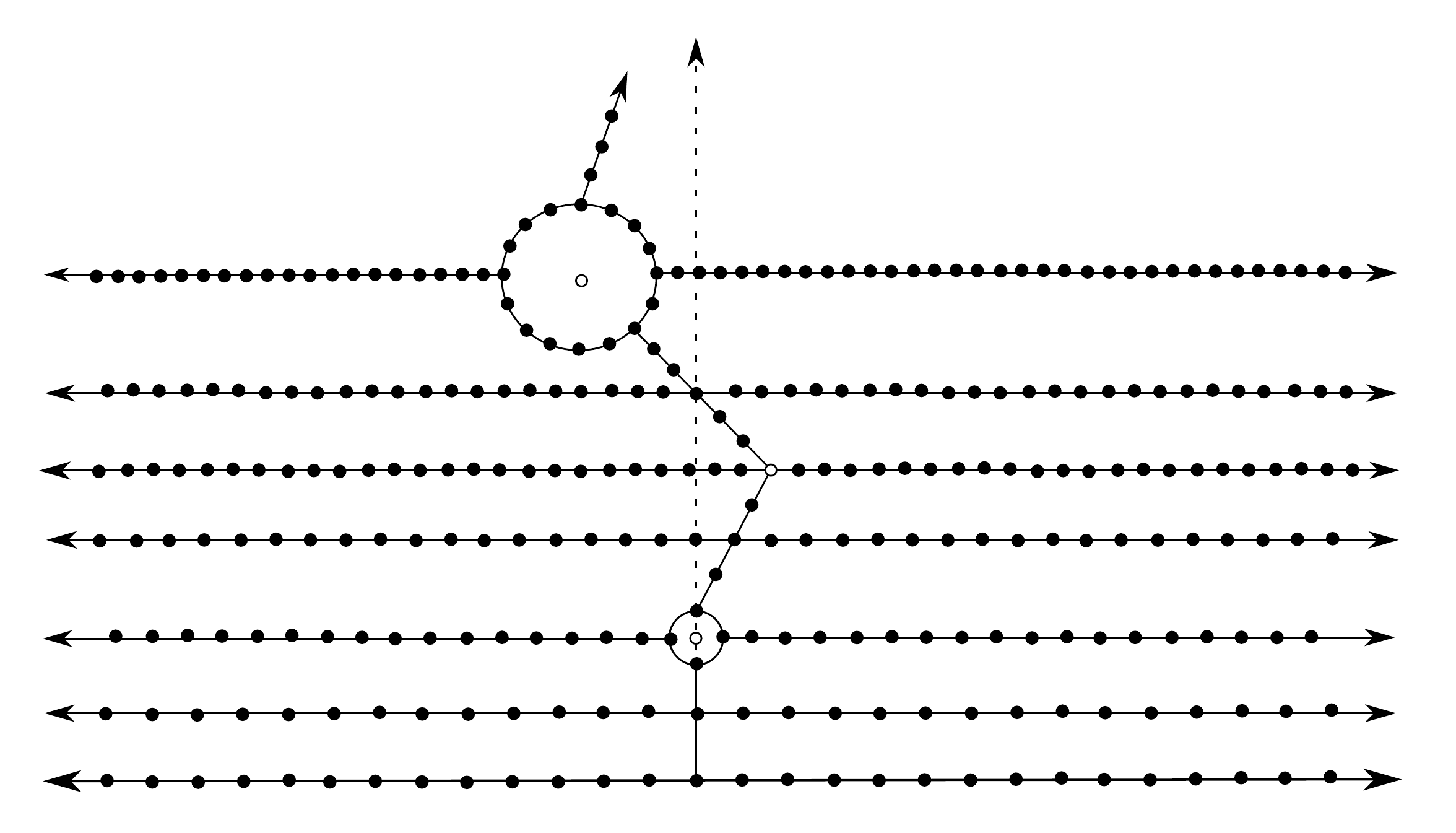}
}
  \caption{ Building a  bounded geometry graph with either a  bounded 
     component centered at each point or vertex there. } 
  \label{half-plane 1}
\end{figure}

The bounded geometry and $\tau$-lower bound are 
clear for all the complementary components of $\tilde{G}$,
except possibly the two components that border
the real line. These require a separate argument; we 
want to show the vertices on the real line can 
be taken with all spacings $\simeq 1$. 
By the  bounded geometry of $\partial W$, the 
vertices on $\partial W$ map to points on the real line 
that define a quasisymmetric partition of the line (see Lemma \ref{Lemma_4.1}).
Condition (8) of Lemma \ref{geodesic lemma} implies that
the spacing between the vertices on $\R$ grows 
exponentially (this is precisely Lemma 8.1 of 
\cite{MR3653246}) and hence the spacing is bounded below. 
Thus by adding more points to $\tilde{G}$ along the real axis,
if necessary, we can assume  
every  edge on the real axis  has length at most $1/4$ and 
without changing the bounded geometry constants of $\tilde{G}$.
This verifies the bounded geometry condition and 
lower $\tau$-bound for the two components that 
border the real line.
Moreover, adding the corresponding vertices to $\partial W$
does not increase the bounded geometry constant or 
the uniformly analytic constant of $\partial W$. 

Therefore, Lemma 4.1 of \cite{MR3653246} implies 
that the  image $G$ of $\tilde{G}$
under the conformal map back to $W$ will be a uniformly
analytic graph $G$ contained in $\overline{W}$.
 The $\tau$-condition will be 
automatically satisfied for components inside $W$, since
this is a conformally invariant condition. 

These account for all the complementary components of 
$G$ except  for one: the  complement $V$  of $\overline{W}$.
This is also an unbounded Jordan domain, 
but it is not clear whether  it satisfies the  $\tau$-condition.
 However, there is a very simple trick for fixing this that 
we take from \cite{MR3653246}. 
Let $\varphi:V \to \uhp$ be the conformal map of $V$ to 
the upper half-plane, taking infinity to infinity. We let 
$\Phi: \uhp \to V$ be its inverse. The vertices 
on $G$  on $\partial W = \partial V$ map to points 
on the real line. By Lemma \ref{Lemma_4.1}, 
these points define a quasisymmetric partition of the real line.
We define a graph in 
the closed upper half-plane by adjoining to the real line 
vertical rays, and placing evenly spaced vertices on each 
ray, where the spacing is the minimum distance of that
ray to its two neighbors to the left and right. This 
defines an infinite ``comb'' tree.  
See Figure \ref{half-plane 2}.

\begin{figure}[htp]
\centerline{
\includegraphics[height=1.5in]{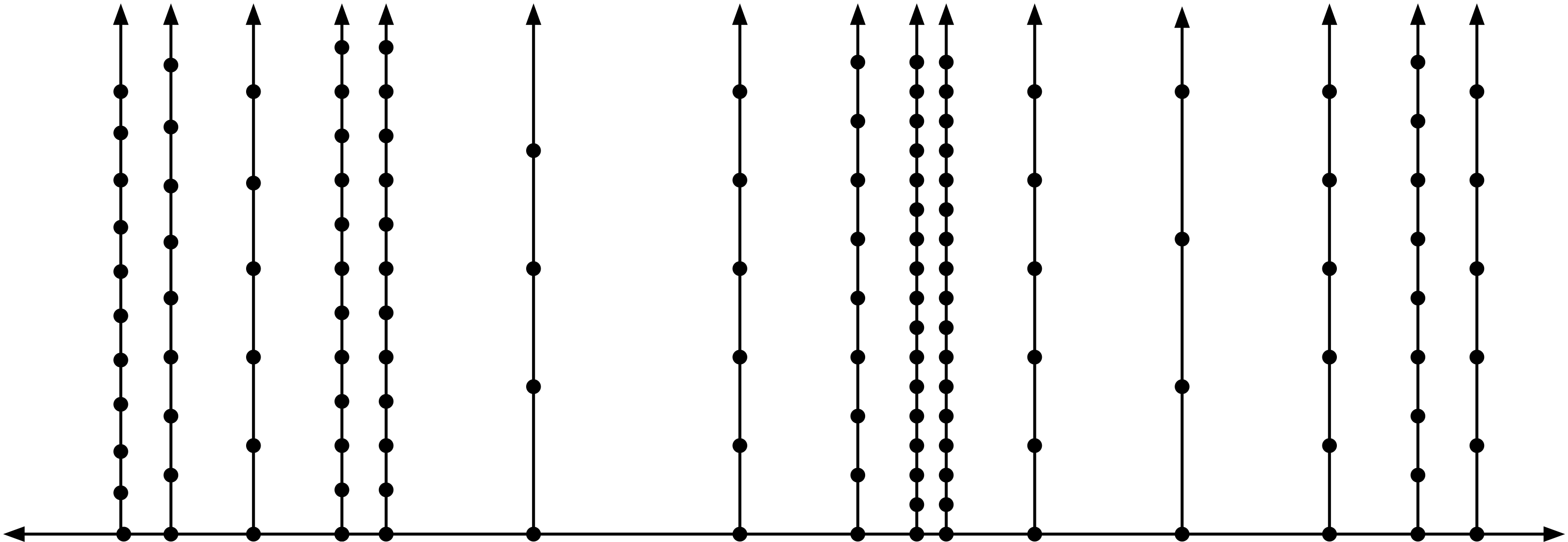}
}
  \caption{Any quasisymmetric partition of the real 
    line can be extended to a bounded geometry graph
    in the upper half-plane that satisfies the 
    $\tau$-condition. } 
  \label{half-plane 2}
\end{figure}

 By Lemma 6.1 of 
\cite{MR3653246},   this tree is uniformly analytic 
and every component 
satisfies the $\tau$-condition for an appropriate choice 
of $\tau$. Therefore by Lemma 4.1 of \cite{MR3653246} again,
the same is true for the conformal image of this 
graph in $V$. Adding this image to $G$ gives a new 
uniformly analytic tree (which we will still call $G$).
We mark all the new components (i.e., the subdomains of $V$)
as R-components. By construction, these only share edges 
with the two unbounded  sub-domains of $W$ that border 
$\partial W$, hence they do not share edges  with any 
ID-component, as required in condition (iii) in the discussion preceding Theorem \ref{folding_with_IR}. 

In fact, the $\tau$-condition remains valid for 
the infinite comb tree  even if the 
spacing of the points decays exponentially in the height.
Therefore we can place the vertices so that the graph 
has bounded geometry, the $\tau$-condition holds on 
each vertical half-strip, and 
the area of $T(r)$ intersected with any of the half-strips 
decays exponentially as we move away from the boundary 
of the half-plane.
Next we use the distortion theorem for conformal maps  
to prove an analogous estimate 
for the conformal image of this graph inside $V$.

\begin{lem}
	\label{area estimate}
Suppose the domain $W$ and graph $G$ are as   described above,
and that $g$ is the corresponding quasi-regular 
function given by the folding construction. 
Let $E$ be the set where $g$ is not holomorphic
(note that $E$ is contained in $T(r)$ by construction).
There is a $\alpha >0$ so that for any $\delta >0$,
 we can choose $W$, $G$ and $g$ so that  
\[ \area(E \cap \{|z|\geq n\}) \leq 
\delta \cdot \exp(-\alpha n), \quad n=0,1,2,\dots \] 
\end{lem}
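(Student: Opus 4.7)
The plan is to bound $\area(E \cap \{|z| \geq n\})$ by separately estimating two contributions: one from the portion of $G$ in $\overline{W}$, and one from the portion of $G$ in $V := \mathbb{C} \setminus \overline{W}$. The required exponential decay rate $\alpha$ will ultimately come from part (4) of Lemma \ref{geodesic lemma}, while the prefactor $\delta$ will be absorbed by choosing the free parameters in the construction small enough.

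For the first contribution, I would invoke parts (4) and (7) of Lemma \ref{geodesic lemma}. By (7), each boundary edge $J_j$ of $\partial W$ has an associated disjoint region $R_j \subset W$ with $\area(R_j) \simeq \diam(J_j)^2$. Summing yields $\sum_j \diam(J_j)^2 \leq C\,\area(W) \leq C\delta$, and in particular every boundary edge satisfies $\diam(J_j) \leq C\sqrt{\delta}$. The same diameter bound applies to the interior edges of $G \cap \overline{W}$ by the bounded geometry of $G$. Consequently, the $T(r)$-neighborhood of $G \cap \overline{W}$ lies within an $O(\sqrt{\delta})$-thickening of $W$, and combining with part (4) of Lemma \ref{geodesic lemma} yields a bound of the form
\begin{equation*}
\area\bigl(T(r) \cap \overline{W} \cap \{|z| \geq n\}\bigr) \leq C\,\delta\,e^{-n}.
\end{equation*}

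For the second contribution, the graph in $V$ is the image under the conformal map $\Phi : \uhp \to V$ (with $\Phi(\infty)=\infty$) of the infinite comb tree in $\uhp$, whose $T(r)$-area in each vertical half-strip decays exponentially with height at a rate $\beta$ we are free to choose. The distortion of $\Phi$ is controlled by part (5) of Lemma \ref{geodesic lemma}: every point of $V$ is within Euclidean distance one of $\partial V = \partial W$, so Koebe's quarter theorem gives a uniform lower bound on the hyperbolic density $\lambda_V$, whence $|\Phi'(z)| \leq C/\Im z$. Two consequences follow. First, each image edge of the comb tree in $V$ has uniformly bounded Euclidean diameter, so the $T(r)$-neighborhood in $V$ is itself contained in a bounded thickening of $W$. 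Second, integrating $|\Phi'|$ along paths in $\uhp$ gives $|\Phi(z) - \Phi(i)| \leq C\,d_{\uhp}(i, z)$, so the preimage $\Phi^{-1}(\{|w| \geq n\})$ lies outside a hyperbolic ball of radius comparable to $n$ around $i$. The exponential decay of the comb tree area in height, combined with the conformal compression $|\Phi'|^2 \leq C/(\Im z)^2$, then yields exponential decay for the second contribution.

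The main obstacle lies in the bookkeeping of the second step. The half-strips of $\uhp$ have widths $w_j$ that grow exponentially with $|j|$, and one must carefully isolate those strips whose conformal image actually intersects $\{|w| \geq n\}$. Fortunately, the distortion bound $|\Phi'(z)| \leq C/\Im z$ translates $\uhp$-height $h$ into $V$-distance at most $C \log h$, so the exponential rate $\beta$ in the comb tree construction becomes super-exponential decay in Euclidean distance in $V$. This provides ample room to absorb any polynomial distortion factors arising from summing over strips, and combining with the first contribution yields the desired bound with a uniform rate $\alpha > 0$ independent of $\delta$.
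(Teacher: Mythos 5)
Your overall decomposition (the contribution inside $\overline{W}$ handled by Lemma \ref{geodesic lemma}(4), the contribution in $V$ via the comb tree, and the conversion of Euclidean distance $n$ into hyperbolic distance $\gtrsim n$ using the distance-$1$ condition) matches the paper, but the central quantitative step for the $V$-contribution has a genuine gap. The only distortion input you use there is the global bound $|\Phi'(z)|\leq C/\Im z$ from Koebe and Lemma \ref{geodesic lemma}(5). Applied to the square $S_{j,k}$ of side $|I_j|$ at height $k|I_j|$ in the strip $S_j$, together with $\area(T(r)\cap S_{j,k})\simeq |I_j|^2 e^{-ck}$, this yields
\[
\area\bigl(\Phi(T(r)\cap S_{j,k})\bigr)\ \lesssim\ \frac{|I_j|^2 e^{-ck}}{(k|I_j|)^2}\ =\ \frac{e^{-ck}}{k^2},
\]
a bound independent of $j$. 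Summing over $k$ gives a contribution of order a fixed constant \emph{per strip}, and since there is one strip for each of the infinitely many edges $J_j$ of $\partial W$, the resulting bound on $\area(E\cap V)$ is infinite; in particular neither the prefactor $\delta$ nor any decay in $n$ can be extracted for the strips with $J_j$ far from the origin. The same defect affects your $k=0$ squares: a ``bounded thickening of $W$'' is an unbounded set of infinite area, so it provides no area estimate, and the $O(\sqrt{\delta})$-thickening of $W$ need not have area comparable to $\area(W)$ either.

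The missing ingredient is the strip-local Koebe estimate $y_j|\Phi'(z_j)|=O(\diam(J_j))$, where $z_j$ is the point at height $y_j=|I_j|$ over the center of $I_j$: this ties the derivative at the base of each strip to the Euclidean size of the image edge $J_j$, which is much smaller than the crude bound $1/y_j$ suggests for the thin far-out tubes. Propagating it up the strip via a distortion lemma (Proposition \ref{lem:diameters of disks}, giving $\diam(\Phi(S_{j,k}))=O(k\,\diam(J_j))$) yields $\area(\Phi(T(r)\cap S_{j,k}))=O(\diam(J_j)^2 e^{-ck/2})$, hence a per-strip total $O(\diam(J_j)^2)=O(\area(R_j))$, which is finally summable over $j$ because the regions $R_j\subset W$ of Lemma \ref{geodesic lemma}(7) are disjoint and have total area at most $\area(W)<\delta$ (restricted to $W\cap U_{n/2}$ for the far-out strips). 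A separate argument is also needed for $k=0$, where one shows $\diam(\Phi(S_{j,0}))=O(\diam(J_j))$ using the comparability of adjacent intervals $I_{j-1},I_j,I_{j+1}$ and a Gehring--Hayman type estimate; your bookkeeping paragraph anticipates difficulties only in isolating which squares meet $\{|w|\geq n\}$, which is the part that actually works, while the divergence over strips is the part that does not.
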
 

\begin{proof}
The domain $W$ was chosen so that $\area(E \cap W)$
satisfies this estimate, so we only need to 
worry about  $\area(E \cap V) = \area(E \setminus
\overline{W})$.
	
We know that 
$E$ will be contained in the conformal image of 
the set $T(r)$  corresponding to the comb tree 
in the upper half-plane  illustrated in 
Figure \ref{half-plane 2}. 
Recall  that $\Phi:\mathbb{H} \to V$ is a conformal map.
The tree in the upper half-plane 
consists of  vertical   rays that define 
vertical half-strips
$\{S_j\}$  with the finite edge $I_j$ lying on 
the real axis. These edges correspond to the 
edges $\{J_j\}$  on $\partial W$ via $\Phi$. 
In Figure \ref{half-plane 2}, the points 
on each vertical ray are shown as being evenly 
spaced, with the spacing being comparable to 
the distance from the ray to its two neighboring 
rays. However, we can space the points at height 
$y$ so they are only separated by distance
\begin{eqnarray}\label{decay rate}
 \simeq |I_j| \exp( -c y/|I_j|),
\end{eqnarray}
 for some $c>0$ and still have the 
$\tau$-condition. This holds since the conformal 
map from a half-strip to a half-plane is given 
in terms of the $\sinh$ function, which has 
exponential growth in the half-strip (for a
single strip we could take $c = \pi$, but since 
the spacing  on a ray depends on the width of both 
adjacent strips, we use a positive $c$
that depends on the relative sizes of adjacent
$I_j$'s).
See Figure \ref{Half-Strip}; note that  
the vertical half-strip
is drawn horizontally to make the illustration 
clearer.

\begin{figure}[htp]
\centerline{
\includegraphics[height=1.5in]{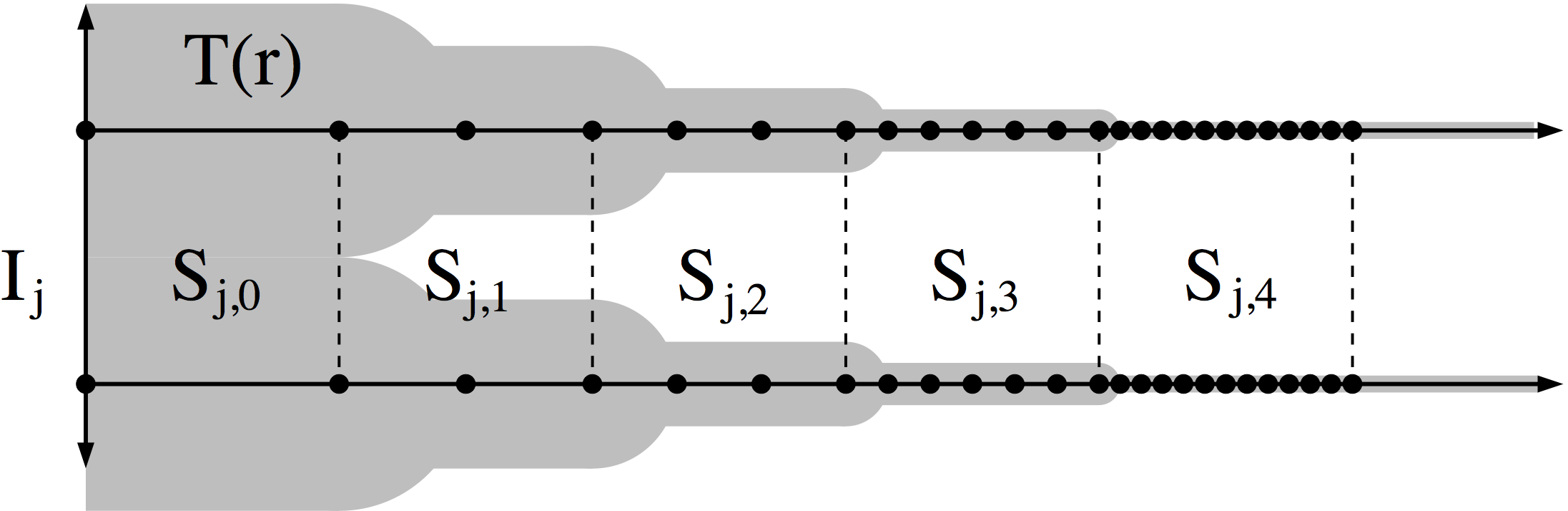}
}
  \caption{The  half-strip $S_j$ is cut into squares
	 $\{ S_{j,k}\}$ whose intersection 
	with $T(r)$ (shaded) has Euclidean area that decays 
	exponentially with $k$. The picture has been 
	rotated by $90^\circ$ compared to Figure 
	\ref{half-plane 2}.
	}
  \label{Half-Strip}
\end{figure}

Now cut $S_j$ into  disjoint squares 
$\{S_{j,k}\}_{k=0}^\infty$ of side length $|I_j|$, 
where $S_{j,k}$  denotes the square whose 
Euclidean distance from the boundary segment $I_j$
is  $k|I_j|$.  Because 
of the exponential decrease in the 
spacing between vertices, the fraction 
of this square that  hits $T(r)$ is bounded 
by $O(\exp(-ck))$ ($c>0$ as in (\ref{decay rate})).
We will show that a similar estimate holds, even 
after we map these squares back to the region $V$:

\begin{lem}\label{exp est}
	With notation as above, 
 $$
\area( \Phi(T(r) \cap S_{j,k})) \leq C \diam(J_j)^2 
 \cdot \exp(-ck/2) ,$$ 
for $k \geq 0$, where $C < \infty$ is fixed and 
	$c >0$ is as in (\ref{decay rate}).
\end{lem}

\begin{proof} 
The case $k=0$ (the square that is adjacent to 
the boundary of the half-plane) is  
different from the cases $k \geq 1$, and we 
deal with it first.

Recall that $\varphi: V \to \uhp$ is our choice of
conformal map and we let $\Phi: \uhp \to V$ denote 
its inverse.
In the case $k=0$,  we simply bound the area
of $\Phi(T(r) \cap S_{j,0})$ by the area
of $\Phi(S_{j,0})$ (i.e., we assume $T(r)$ fills
the entire square)   and we  claim the 
latter set has diameter  bounded by a 
uniform multiple of  $\diam(J_j)$.

Let $x_j$ be the center of the boundary segment 
$I_j$,  let $y_j = |I_j|$ and define $z_j = x_j +i y_j
\in \uhp$ and $w_j = \Phi(z_j) \in V$.
 By Koebe's theorem 
\begin{equation}\label{Koebe_estimate}\dist(w_j, \partial V) \simeq y_j |\Phi'(z_j)|,\end{equation}
and hence 
\begin{eqnarray}\label{exercise 8}
  y_j |\Phi'(z_j)| = O( \diam(J_j)),
 \end{eqnarray}
e.g., see Exercise IV.8 in \cite{MR2150803}.


\begin{figure}[htp]
\centerline{
\includegraphics[height=2.3in]{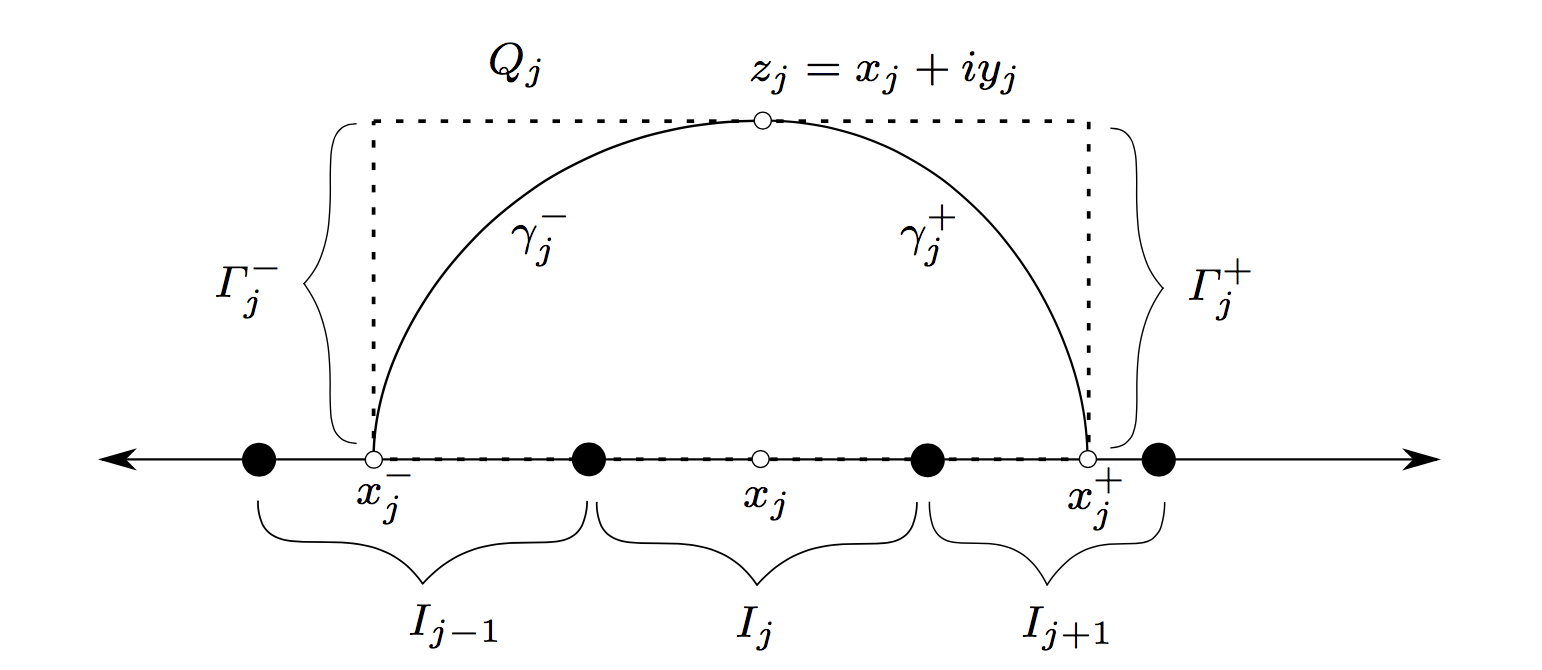}
}
  \caption{  Illustrated is the notation used in the proof of Lemma \ref{exp est} for the case $k=0$.  }
  \label{fig:referee_argument}     
\end{figure}


Assume the boundary intervals $\{I_j\}$  are 
numbered consecutively, so that $I_{j-1}$ 
and $I_{j+1}$ are adjacent to $I_j$.  
By Lemma \ref{Lemma_4.1},  
 the intervals $I_{j-1}$, $I_j$, $I_{j+1}$ have  uniformly comparable lengths (uniform over $j$). Thus by Corollary 4.18 of \cite{MR1217706} and (\ref{Koebe_estimate}), there exists a point $x_j^-\in I_{j-1}$ such that the geodesic $\gamma_j^-$ in $\mathbb{H}_u$ with endpoints $x_j^{-}$ and $z_j$ satisfies \begin{equation}\label{geodesic_length} \textrm{length}(\Phi(\gamma_j^-)) \leq My_j\left|\Phi'(z_j)\right|, \end{equation} for $M<\infty$ independent of $j$ (see Figure \ref{fig:referee_argument}). Let $\Gamma_j^-$ denote the vertical segment connecting $x_j^-$  to $x_j^-+iy_j$. We claim that $\Phi(\Gamma_j^-)$ and $\Phi(\gamma_j^-)$ have uniformly comparable lengths (uniform over $j$). To see this, we cut $\Gamma_j^-$ and $\gamma_j^-$ into subsegments $\Gamma_{j,k}^-$ and $\gamma_{j,k}^-$ for $k=0$, $1$, $2$, $\dots$, where the $k^{\textrm{th}}$ subsegment is defined to be the subsegment lying in the horizontal strip \[\{x+iy:2^{-k-1}y_j \leq y \leq 2^{-k}y_j\}. \] Two such subsegments $\Gamma_{j,k}^-$, $\gamma_{j,k}^-$ lie in a common hyperbolic disc of fixed radius (independent of $j$, $k$), and so by Koebe's distortion theorem the derivative of $\Phi$ is uniformly comparable throughout this disc. Thus since the lengths of $\Gamma_{j,k}^-$, $\gamma_{j,k}^-$ are comparable, \[\textrm{length}(\Phi(\Gamma_{j,k}^-)) \textrm{ and }  \textrm{length}(\Phi(\gamma_{j,k}^-)) \] are uniformly comparable (uniform over $j$, $k$). Thus the bound (\ref{geodesic_length}) implies that \begin{equation}\label{comparable_length} \textrm{length}(\Phi(\Gamma_{j}^-)) \leq M' y_j\left|\Phi'(z_j)\right| \end{equation} for $M'<\infty$ independent of $j$. Analogously, there is a point $x_j^+\in I_{j+1}$ such that the image under $\Phi$ of the vertical segment $\Gamma_j^+$ connecting $x_j^+$ to $x_j^++iy_j$ satisfies a similar bound. Lastly, consider the horizontal segment $l_j$ connecting $x_j^-+iy_j$ to $x_j^++iy_j$. Since $I_{j-1}$, $I_j$, $I_{j+1}$ have uniformly comparable lengths, \begin{equation} \textrm{length}(\Phi(l_j)) = O(y_j\left|\Phi'(z_j)\right|) \end{equation} by Koebe's distortion theorem. Let $Q_j$ be the Euclidean rectangle with vertices $x_j^-$, $x_j^+$, $x_j^++iy_j$, $x_j^-+iy_j$. Summarizing, we have $\Phi(S_{j,0}) \subset \Phi(Q_j)$, 
and the boundary of the latter set is contained
inside the union of  $\Phi(\Gamma_j^-)$, $\Phi(\Gamma_j^+)$, $\Phi(l_j)$, 
$J_{j-1}$, $J_j$,  and $J_{j+1}$; all of
these have diameter $O(\diam(J_j))$ by (\ref{exercise 8}).
Thus 
$\diam(\Phi(S_{j,0})) = O(\diam(J_j))$, so the $k=0$ case of Lemma \ref{exp est} has been proved.

Next  we verify Lemma \ref{exp est} for $k \geq 1$.
In this case, $S_{j,k}$ has bounded hyperbolic 
diameter, so  Koebe's distortion theorem implies 
\begin{eqnarray}\label{ratio}
\frac { \area(\Phi(T(r) \cap S_{j,k}))}
      { \area ( \Phi( S_{j,k}))} 
\simeq 
\frac { \area(T(r) \cap S_{j,k})}
      { \area (  S_{j,k})} 
\simeq \exp(-ck)\textrm{,}
\end{eqnarray}
where $c>0 $ is as in (\ref{decay rate}).
Thus it suffices to bound $ \area(\Phi( S_{j,k}))$.
To do this,  
we  use Lemma 16.1 of \cite{Albrecht-Bishop}:

\begin{prop}\label{lem:diameters of disks}
Let $\Omega\neq \mathbb{C}$ be simply connected and let
 $\varphi:\Omega \to \uhp$ be a conformal map to 
the  upper  half-plane.  Let $\Phi:\uhp \to \Omega$
 denote the inverse of $\varphi$. Let
$w = x+it$ and $z=x+iy$ with $y>t$ and let $X\subset\uhp$
be a simply connected neigbourhood
 of $z$ with hyperbolic radius bounded by $r$.
Then 
\[
\diam (\Phi(X))=O( |\Phi'(w)| \frac yt \diam (X))
\]
where the constant depends only on $r$.
\end{prop}

The statement of  this in \cite{Albrecht-Bishop}
is for the special case  $t=1$ and a map into the 
right half-plane, but the version above
 follows immediately 
by considering our $\varphi$  composed 
with the linear map $z \to -\frac it  z$. 
The proof given in  \cite{Albrecht-Bishop}
is  a short deduction from the standard 
distortion theorem for conformal maps, 
e.g., Theorem I.4.5 of \cite{MR2150803}.

We apply Proposition \ref{lem:diameters of disks}
using $\varphi$,   $t= |I_j| = y_j$, 
$w = x_j +iy_j = z_j$ and  $X=S_{j,k}$, $k \geq 1$. 
Note that $y/t =k$ and $\diam(X) = y_j$. Then 
$$\diam(\Phi(X))  = O(|\Phi'(z_j)| \cdot k 
\cdot y_j ),$$
and using (\ref{exercise 8}) gives
$$\diam(\Phi(X))    = O(\diam(J_j) \cdot k).$$ 
Since $k = o(\exp(ck/4))$, we get 
$$\area(\Phi(X))    = O(\diam(J_j)^2 \exp(ck/2 )), $$ 
and hence, using (\ref{ratio}),  
$$\area( E \cap \Phi(X))  = O(\diam(J_j)^2 \exp(-ck/2 )), $$ 
which gives the  $k \geq 1$  cases of Lemma \ref{exp est}.

\end{proof}

Now that  Lemma \ref{exp est} is established, we can 
finish the proof of Lemma \ref{area estimate}. 
Let $E_j = E \cap \Phi(S_j)$ and note that 
$$ \area(E_j) =\sum_{k=0}^\infty \area (E_j \cap \Phi(S_{j,k})) 
 = O( \diam(J_j)^2 \sum_{k=0}^\infty \exp(-kc/2)) 
 = O( \area(R_j) ), $$
 where $\{R_j\}$ are the regions from Lemma 
 \ref{geodesic lemma}.
Next, let $U_n = \{z \in \mathbb{C} : |z|\geq n\}$. Then 
$$ \area(E \cap U_n) =\sum_j \area(E_j \cap U_n).$$
We break this sum into two parts, depending on 
whether $J_j$ is contained in $U_{n/2}$ or not. 
For the first sum, we have
$$ \sum_{j:J_j \subset U_{n/2}}  \area(E_j) 
= O(\sum \area(R_j))= O(\area(W \cap U_{n/2})) 
= O(\exp(-n/2)).$$

If $J_j$ is not contained in $U_{n/2}$, then 
let $\gamma_j$ be the hyperbolic geodesic
connecting the two endpoints of $J_j$ inside 
$V$. Note that $\diam(\gamma_j)$ is uniformly bounded from above by a theorem of Gehring and Haymann (see, for instance, Exercise III.16 in \cite{MR2150803}) since $\diam(J_j)$ is uniformly bounded from above by the construction of $G$. Thus any curve that connects
$\gamma_j$ to $\partial U_{n}$ inside $V$ has 
quasi-hyperbolic length at least comparable 
to $n$ (recall that we have constructed $W$
so that every point of $V$
is within Euclidean distance $1$ of $\partial V$).
By Koebe's distortion theorem, the hyperbolic 
and quasi-hyperbolic metrics are comparable, 
and hence the hyperbolic distance from $\gamma_j$ 
to $\partial U_{n} = \{z \in \mathbb{C} : |z|=n\}$ is also comparable to $n$.
Thus any square $S_{j,k}$ whose $\Phi$-image 
hits $U_n$ has at least hyperbolic distance 
$\simeq n$ to $S_{j,0}$ in the upper half-plane
and hence $k \geq \exp(a n)$ for some fixed $a>0$.
Now fix $j$ and sum all 
over all the squares $S_{j,k}$ whose $\Phi$-images hits $U_{n/2}$:
\begin{eqnarray*}
 \area (E_j \cap U_n)
&\leq& 
\area(R_j) O( \sum_{k > \exp(an)}  \exp(-ck/2) )\\
&\leq& \area(R_j)  O( \exp(-a c n/2) ),
\end{eqnarray*}
for some $a >0$.
The $\{R_j\}$ are  pairwise disjoint and 
contained in $W$, so 
summing  $\area(R_j)$ over all $j$ is bounded 
by $\area(W)$. Taking $\alpha = ac/2$ 
completes the proof of Lemma \ref{area estimate}.

\end{proof} 



\noindent We end this Section with the following consequence of Theorem \ref{folding_with_IR} and Lemmas \ref{geodesic lemma} and \ref{area estimate}:

\begin{lem}\label{folding_final_}
Suppose $\varepsilon, \delta >0$ and 
	suppose we are given
an infinite, discrete set of points 
$\{z_n\}_{n=1}^{\infty}$ in the plane, and a sequence
$\{w_n\}_{n=1}^{\infty}$ so that for each $n$, either $w_n=\pm1$ or $|w_n| $ is 
uniformly bounded away 
from $1$ (i.e., $||w_n|-1|> \varepsilon >0$). 
Then we can find a quasiregular map 
$g: {\mathbb C} \to \widehat {\mathbb C}$ so that:
\begin{enumerate}[{(1)}]
   \item For all $n \in \N$, 
$g$ has a critical point at $z_n$ whose 
critical value is $w_n$. 
   \item $S(g) = \{w_n\}_{n=1}^{\infty} \cup \{ \pm 1\}$, i.e., 
 the only other singular values of $g$ are $\pm 1$  
(these correspond to the  critical points occurring 
at the vertices of the graph $G'$), 
and the asymptotic value $0$ coming from the 
IR-components ($0$ can be replaced by any 
value in $\{|w|<1-\varepsilon\}$).
   \item The map $g$ is conformal except on 
a set  $E$ whose area is less than $\delta$,
and is exponentially small near 
$\infty$, i.e., we have 
 ${\rm{area}}(E\cap \{|z|>n\})<  \delta \exp(-n)$.
    \item Moreover, $K$ does not depend on the 
particular critical values $\{w_n\}$ chosen, but only on 
$\{z_n\}$ and $\varepsilon$.
\end{enumerate}
\end{lem}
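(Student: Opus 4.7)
The plan is to apply Theorem~\ref{folding_with_IR} to a carefully labeled bounded-geometry graph built from the data $\{z_n\},\{w_n\}$. First, set a scaling parameter $s = 1-\varepsilon/2 \in (0,1)$; then $\{|w|<s\}$ (the accessible range for critical values from D-components) together with $\{|w|>1/s\}$ (the range for ID-components) covers the set $\{w:||w|-1|>\varepsilon\}$, so every $w_n$ with $w_n\neq \pm 1$ can be realized in one of the two ways.

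Next, apply Lemma~\ref{geodesic lemma} to $\{z_n\}$ (augmented with auxiliary points to obtain $1$-density in the plane) to produce a Jordan domain $W$ of total area less than $\delta$ with exponentially decaying tails, together with a uniformly analytic tree structure on $\partial W$. Following Section~\ref{Graph_I}, form a graph $G$ inside $\overline{W}$ consisting of small bounded components around each $z_n$ joined by thin tubes, and adjoin the comb-like extension in $V=\mathbb{C}\setminus\overline{W}$ to enforce the $\tau$-condition there. For each $n$ with $||w_n|-1|>\varepsilon$, place $z_n$ as the center of a bounded component carrying exactly two boundary vertices, so that $z_n$ becomes a simple critical point of the folding map; label this component as a D-component when $|w_n|<1-\varepsilon$ and as an ID-component when $|w_n|>1+\varepsilon$. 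For each $n$ with $w_n=\pm 1$, place $z_n$ directly as a vertex of $G$. The unbounded components are then labeled R or IR subject to the adjacency constraints (i)--(iv) preceding Theorem~\ref{folding_with_IR}; because R- and IR-components are permitted to be mutually adjacent, any unbounded strip that would otherwise need to border both a D- and an ID-component can be split by an additional edge, which has only a uniformly-bounded local effect on the geometry.

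Applying Theorem~\ref{folding_with_IR} with this $s$ yields the desired $K$-quasiregular map $g$. Conclusions (1)--(2) of the lemma follow directly; conclusion (4) (independence of $K$ from the specific $\{w_n\}$) holds because the only dependence on $w_n$ enters through the post-composition maps $\rho$ on D- and ID-components, which are uniformly quasiconformal so long as each critical value stays $\varepsilon$-away from the unit circle. Conclusion (3) follows from Lemma~\ref{area estimate}: one can arrange the rate constant $\alpha$ there to be at least $1$ by choosing the tube-spacing constant $c$ in (\ref{decay rate}) sufficiently large (since $\alpha = ac/2$), and the initial $\delta$ fed into Lemma~\ref{geodesic lemma} can be scaled down to absorb any multiplicative losses. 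I expect the main subtlety to be the $w_n=\pm 1$ case: the bipartite $\{\pm 1\}$-labeling on the extended graph $G'$ must assign each such $z_n$ the correct sign, which may force the insertion of a single auxiliary vertex on an adjacent edge to flip parity; this is a purely local adjustment that leaves the bounded-geometry constants of $G$ intact.
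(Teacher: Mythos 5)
Your proposal follows essentially the same route as the paper's own proof: build the graph of Lemma \ref{geodesic lemma} and Section \ref{Graph_I} with a bounded component centered at $z_n$ (labelled D or ID according to whether $|w_n|<1$ or $|w_n|>1$) when $w_n\neq\pm1$, a vertex at $z_n$ when $w_n=\pm1$, and then invoke Theorem \ref{folding_with_IR} for (1), (2), (4) and Lemma \ref{area estimate} for (3). Two details are off. First, a bounded component with exactly two boundary vertices does \emph{not} produce a critical point: the folding map on a D-component with $2n$ boundary vertices is $\rho\circ\tau^n$, which has local degree $n$ at the center, so $n=1$ gives a homeomorphism of the disk and conclusion (1) would fail. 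You need at least four vertices on each such component (this costs nothing, since the construction allows any even number of vertices, uniformly bounded, on each disk). Second, your repair of the exponent in (3) goes the wrong way: the constant $c$ in (\ref{decay rate}) cannot be taken arbitrarily large, because denser vertex spacing up the rays shortens the $\tau$-images of the edges and eventually violates the $\tau$ lower bound (for a single strip $c=\pi$ is essentially the ceiling). The honest resolution is either to note that the $\exp(-\alpha n)$ decay of Lemma \ref{area estimate} is all that is used downstream (only thinness of the dilatation support near $\infty$ matters, not the exact exponent), or to observe that the tail over $V$ is actually superexponential (the sum $\sum_{k>\exp(an)}\exp(-ck/2)$ is far smaller than $\exp(-acn/2)$) while the tail over $W$ is controlled directly by part (4) of Lemma \ref{geodesic lemma}. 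Your handling of the R/IR adjacency labelling and of the bipartite $\pm1$ parity at vertices is a reasonable filling-in of details the paper leaves implicit.
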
 

\begin{proof} Given the sequences $\{z_n\}_{n=1}^{\infty}$ and $\{w_n\}_{n=1}^{\infty}$, one obtains a bounded geometry graph $G$ through Lemma \ref{geodesic lemma} (and the ensuing discussion in Section \ref{folding_first_section}) with the following property: for each $n\geq1$, $G$ has a vertex at $z_n$ if $w_n=\pm1$, and if $w_n\not=\pm1$, $G$ has a D-component or ID-component centered at $z_n$ according to whether $|w_n|<1$ or $|w_n|>1$, respectively. The bounded geometry constants of $G$ depend only on $\{z_n\}_{n=1}^{\infty}$ by Lemmas \ref{geodesic lemma} and \ref{area estimate}. Theorem \ref{folding_with_IR} then yields (1), (2) and (4). Property (3) is a  consequence of Lemma \ref{area estimate}.

\end{proof}

\section{Reducing Theorem \ref{main} to 
a special case}\label{Reducing}

Next we show that it suffices to prove 
Theorem \ref{main} using extra hypotheses 
on the set $S$.

First,  after conjugating by a conformal linear transformation $z\rightarrow az+b$, we may assume $\pm1\in S$. In other words, given a general $S$ as in Theorem \ref{main}, we can always find a conformal linear transformation $z\rightarrow m(z)$ so that $\pm1 \in m(S)$. It suffices to find some meromorphic $f$ satisfying the conclusion of Theorem \ref{main} for $m(S)$, since then $m^{-1}\circ f\circ m$ is the desired meromorphic function in the conclusion of Theorem \ref{main} for the initial $S$.

Next, we claim that since $|S|\geq4$, we may further always choose the conjugating linear transformation $z\rightarrow m(z)$ so that there is some $s\in S$ with $|m(s)|<1$ in addition to $\pm1 \in m(S)$. Indeed, note that it would suffice to find three points $s,p,q \in S$ so that the circle whose diameter is the straight line segment $[s,p]$ joining $s, p$ contains the third point $q$ in its interior. Moreover, this happens if and only if the angle subtended by $[s,p]$ at $q$ is greater than or equal to $\pi/2$. If three points of $S$ are collinear, the statement is obvious. If we assume a point $r\in S$ is in the interior of the convex hull $T$ of $s,p,q \in S$, then the three angles subtended at $r$ by the three edges of $T$ sum to $2\pi$, hence one of the angles is greater than $\pi/2$, as needed. Lastly, if no point of $s,p,q,r$ is in the convex hull of the other three, then the two segments connecting alternating pairs of points must cross one another. If the closed disk corresponding to each segment does not contain either of the two points of the other pair, the circles must cross in at least four points, which is impossible. Hence we can always choose the conjugating linear transformation $z\rightarrow m(z)$ so that there is some $s\in S$ with $|m(s)|<1$ in addition to $\pm1 \in m(S)$.
 

Furthermore, we claim that we may further assume that $S$ does not intersect some open neighborhood of $|z|=1$ other than at $\pm1$. Indeed, suppose for the moment that we can prove Theorem \ref{main} for such an $S$. If we then consider the case when $S$ has finitely many points of modulus one, we adjust $S$ by moving each such point of modulus one (other than $\pm1$) by a radial distance $\varepsilon/2$ inside of $\mathbb{D}$. Then we only need to apply Theorem \ref{main} with $\varepsilon/4$ to this adjusted $S$ to obtain the desired result. 

Let us return to Theorem \ref{main}, where we are given $\varepsilon>0$, a discrete sequence $S=(s_n)$ and some dynamics $h:S \rightarrow S$. By the above discussion, we may assume henceforth that $\pm1 \in S$, there is some $s\in S$ with $|s|<1$, and that $\forall s \in S$ with $s\not=\pm1$, $||s| - 1|>2\varepsilon$. Thus given any sequence $(t_n^*)$ with $|t_n^* - h(s_n)| < \varepsilon$ for all $n$ and $t_n^*=h(s_n)$ if $h(s_n)=\pm1$, we may apply Lemma \ref{folding_final_} with $\{z_n\}=\{s_n\}$, $\{w_n\}=\{t_n^*\}$ to yield a quasiregular map $g: \mathbb{C} \rightarrow\hat{\mathbb{C}}$ such that $g$ has a critical point at each $s_n$ with corresponding critical value $t_n^*$. By the measurable Riemann mapping theorem, there exists some quasiconformal map $\phi: \mathbb{C}\rightarrow\mathbb{C}$ such that $f=g\circ\phi^{-1}$ is meromorphic. Moreover, since Lemma \ref{folding_final_} guarantees that the support of the dilatation of $\phi$ is exponentially small near $\infty$, we may normalize $\phi$ so that $\phi(z)=z+O(1/z)$ near $\infty$ (see for instance \cite{MR1466801}). Furthermore, by choosing $\delta$ in Lemma \ref{folding_final_} sufficiently small, we may guarantee that $|\phi(z)-z|<\varepsilon$ for all $z\in\mathbb{C}$. In the next section we will see how to choose $\{w_n\}=\{t_n^*\}$  so that  $\phi^{-1}(t_n^*)=h(s_n)$ for all $n$.

If we can prove that there is a choice of $\{w_n\}=\{t_n^*\}$  so that  $\phi^{-1}(t_n^*)=h(s_n)$ for all $n$, Theorem \ref{main} will be proven in the case that $h$ is onto. Indeed, we would then have $P(f)=S(f)= \{t_n^*\}$, and $\psi: S\rightarrow P(f)$ defined as $\psi(h(s_n)):=t_n^*$ (see also the discussion in Section 1). If $h$ is not onto, the definition $\psi(h(s_n)):=t_n^*$ of course does not define $\psi$ on the entirety of $S$ and we may have, for instance, that $|P(f)|=|S(f)|<|S|$. However we claim that in what follows, we may assume that $h$ is onto. Indeed, if $h$ is not onto, we may augment the sequence $S$ with auxiliary points to form a discrete sequence $\tilde{S}$, and extend $h$ to a function $\tilde{h}:\tilde{S}\rightarrow S$ such that $\tilde{h}$ is onto $S$. Then we apply Lemma \ref{folding_final_} with $\{z_n\}=\{\tilde{s}_n\}$ and $\{w_n\}=\{t_n^*\}$ where $\{t_n^*\}$ is any sequence with $|t_n^* - \tilde{h}(\tilde{s}_n)| < \varepsilon$ for all $n$ and $t_n^*=\tilde{h}(\tilde{s}_n)$ if $\tilde{h}(\tilde{s}_n)=\pm1$. Then, again, if we can choose $\{w_n\}=\{t_n^*\}$  so that  $\phi^{-1}(t_n^*)=\tilde{h}(\tilde{s}_n)$ for all $n$, then $f:=g\circ\phi^{-1}$ will be the desired function of Theorem \ref{main}, with $P(f)=S(f)= \{t_n^*\}$ and $\psi(\tilde{h}(\tilde{s}_n)):=t_n^*$ now defined on all of $S$.

\section{Existence of a Fixpoint}\label{fixpoint_section}

With the concluding remarks of Section 1 in mind, we will look for a fixpoint of a self-map of an infinite product of closed  Euclidean discs. This fixpoint will correspond to a quasiregular map $g$ so that $g\circ\phi^{-1}$ is the desired meromorphic function in the conclusion of Theorem \ref{main}. In the previous sections we built a graph $G$ and a quasiregular map $g$ associated to a pair $(S,h)$ from Theorem \ref{main}. The set of critical points of $g$ included $S$.


The function $g$ depended on a choice of the images of each $s\in S$. We enumerate $S=(s_i)$. For each choice of $(s_j^*)_{j=1}^\infty$ where $s_j^*\in \overline{D(\varepsilon, s_j)}$, Lemma \ref{folding_final_} (and the assumption that $h$ is onto - see Section \ref{Reducing}) gives some quasiregular function $g$ with critical points including $S$, and critical values at each $s_j^*$ where $g(s_i)=s_j^*$ if and only if $h(s_i)=s_j$.  Thus there is a corresponding quasiconformal map $\phi$ so that $g\circ\phi^{-1}$ is meromorphic. Moreover, we have noted that we can arrange for $|\phi(z)-z|<\varepsilon$ for all $z\in\mathbb{C}$, and $\phi(z)-z=o(1)$ as $z\rightarrow\infty$. Since $\phi(z)-z=o(1)$ as $z\rightarrow\infty$, we can now fix some positive sequence $\varepsilon_i\rightarrow0$ with $\varepsilon_i<\varepsilon$ over all $i$, such that $|\phi(s_i)-s_i|<\varepsilon_i$, and $(\varepsilon_i)$ is independent of a choice of $(s_i^*)$.


\begin{lem} 
With notation as above, the  composition of maps
\begin{equation*}
	\prod_{i=1}^\infty \overline{D(\varepsilon_i,s_i)}
 \rightarrow L^\infty(\mathbb{C})
	\rightarrow \prod_{i=1}^\infty \overline{D(\varepsilon_i,s_i)}
 \end{equation*}
given by
\begin{equation}\label{fixpoint}
(s_i^*)\rightarrow\mu_{(s_i^*)}
\rightarrow(\phi_{\mu_{(s_i^*)}}(s_i))
 \end{equation}
is continuous between the product topologies.
\end{lem}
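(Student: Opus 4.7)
Since both the domain and codomain are countable products of compact metric spaces, their product topologies are metrizable, so it suffices to verify sequential continuity. Suppose a sequence $\bigl((s_i^{*,n})_i\bigr)_n$ converges to $(s_i^*)_i$ in the product topology, i.e., $s_i^{*,n} \to s_i^*$ as $n \to \infty$ for each fixed $i$. The task is then to show that for every fixed index $i$, $\phi_{\mu_{(s_j^{*,n})}}(s_i) \to \phi_{\mu_{(s_j^*)}}(s_i)$.

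The key observation concerns the local dependence of the Beltrami coefficient $\mu_{(s_j^*)}$ on the parameters. By the folding construction of Sections \ref{folding_first_section}--\ref{Graph_I}, the quasiregular map $g$ is built so that its dilatation is supported on a neighborhood of the graph $G'$, which splits into two disjoint pieces: (i) annular regions inside the D- and ID-components, coming from the quasiconformal maps $\rho_j: \mathbb{D} \to \mathbb{D}$ that prescribe the critical values $s_j^*$ (and which are conformal on $\{|z|<3/4\}$ and the identity on $\partial \mathbb{D}$), and (ii) strips near the edges of $G'$ inside the R- and IR-components, coming from the folding modifications of Theorem \ref{QC folding thm}. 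The dilatation on (ii) does not depend on $(s_j^*)$ at all, while on the annular region inside $D_j$ it depends only on the single parameter $s_j^*$. Moreover, the $\rho_j$'s can be chosen so that their dilatations vary continuously with $s_j^*$ in the $L^\infty$ sense, with a uniform bound $\|\mu_{(s_j^*)}\|_{L^\infty(\mathbb{C})} \leq k_0 < 1$ guaranteed by Lemma \ref{folding_final_}.

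It follows that $\mu_{(s_j^{*,n})}(z) \to \mu_{(s_j^*)}(z)$ for almost every $z \in \mathbb{C}$, with the above uniform $L^\infty$ bound. Theorem \ref{dependence} then yields $\phi_{\mu_{(s_j^{*,n})}} \to \phi_{\mu_{(s_j^*)}}$ uniformly on compact subsets of $\mathbb{C}$, and in particular $\phi_{\mu_{(s_j^{*,n})}}(s_i) \to \phi_{\mu_{(s_j^*)}}(s_i)$ for each fixed $i$, which is precisely the coordinate-wise convergence required. The main obstacle is articulating the localization principle: one must trace through the construction of $g$ to verify that at any single $z \in \mathbb{C}$, the dilatation of $g$ depends on at most one of the parameters $s_j^*$, and does so continuously. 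Once this is established, Theorem \ref{dependence} delivers continuity of the composition with no further work.
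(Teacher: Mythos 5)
Your proposal is correct, and it takes a genuinely different (and somewhat more economical) route than the paper. The paper factors the map through $L^\infty(\mathbb{C})$ with its \emph{norm} topology and proves continuity of each factor separately; the harder half there is showing that $(s_i^*) \mapsto \mu_{(s_i^*)}$ is norm-continuous, which requires observing that, because $\varepsilon_j \to 0$, only finitely many coordinate discs need to be shrunk in order to keep $\|\mu_{(t_i)}-\mu_{(s_i^*)}\|_{L^\infty(E_j)}$ small over all $j$ simultaneously. You sidestep that uniformity issue entirely: since Theorem \ref{dependence} only needs a.e.\ pointwise convergence of the Beltrami coefficients together with a uniform bound $k_0<1$ (which the uniform $K$-quasiregularity in Lemma \ref{folding_final_} supplies), and since domain and codomain are metrizable so sequential continuity suffices, coordinate-wise convergence of the parameters plus the localization principle already gives the continuity of the composite. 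Both arguments rest on the same structural facts — that the support of the dilatation decomposes into disjoint pieces each of which sees at most one parameter (including, as the paper makes explicit, the set $T(r_0)\setminus\bigcup_s D_s$ attached to the asymptotic-value parameter, which your item (ii) should really absorb into the parameter-dependent part), and that the interpolating maps $\rho$ can be chosen with dilatation depending continuously on the prescribed critical value; the paper asserts this last point as ``clear'' at the same level of detail as you do. Your version never needs $\varepsilon_j\to 0$ for the continuity itself (that hypothesis remains essential for the self-mapping property used in the fixpoint argument), while the paper's version buys the slightly stronger conclusion that the first arrow is continuous into the norm topology of $L^\infty(\mathbb{C})$.
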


\begin{proof} 
We recall that a basis for the product topology on
$\prod_{i=1}^\infty \overline{D(\varepsilon_i,s_i)}$
is given 
by products $\prod_{i=1}^\infty U_i$ where each $U_i$
is open in $\overline{D(\varepsilon_i,s_i)}$ and 
$U_i=\overline{D(\varepsilon_i,s_i)}$ except for finitely many 
indices $i$. In particular the topology on 
$\prod_{i=1}^\infty \overline{D(\varepsilon_i,s_i)}$ is coarse,
 and so it is  easy 
to prove continuity of a map \emph{into}  
$\prod_{i=1}^\infty \overline{D(\varepsilon_i,s_i)}$; we only 
need to check continuity into each factor of the product
(see for example Theorem 19.6 of 
\cite{MR0464128}). This is precisely 
Theorem \ref{dependence}.
This gives the continuity of the second map in 
(\ref{fixpoint}).

 On the other hand, it is slightly more difficult to 
prove continuity of the first map in 
(\ref{fixpoint}).
 Fix some sequence $(s_i^*) \in \prod_{i=1}^\infty 
\overline{D(\varepsilon_i,s_i)}$ and an open neighborhood 
$D(r,\mu_{(s_i^*)})\subset L^\infty(\mathbb{C})$. 
We need to find some product of open sets 
$\prod_{i=1}^\infty U_i \ni (s_i^*)$ so that 
$U_i=\overline{D(\varepsilon_i,s_i)}$ except for finitely many 
indices $i$, and for any $(t_i)\in\prod_{i=1}^\infty U_i$
we have $||\mu_{(t_i)}-\mu_{(s_i^*)}
||_{L^\infty(\mathbb{C})}<r$. 

Suppose we have indexed $s_1^*$ as the unique asymptotic 
value of $g$, and consider some fixed $i>1$. Varying 
$s_i^*$ changes the dilatation of $g$ only on a 
collection of thin annuli $A_j\subset D_j$ for which
$h(s_j)=s_i$. Let $E_i$ be the union of those annuli
$A_j \subset D_j$ for which $h(s_j)=s_i$. For $s_1^*$,
we define $E_1$ as $T(r_0)\setminus(\cup_s D_s)$ in
union with any $A_j\subset D_j$ for which $h(s_j)=s_1$;
the reason for this definition is that varying $s_1^*$
changes the dilatation of $g$ only on $E_1$. 

Now consider some fixed $E_j$. Then $\mu_{(t_i)}|_{E_j}$ depends only on $t_j$, and as $t_j\rightarrow s_j^*$, it is clear that $\mu_{(t_i)}|_{E_j} \rightarrow \mu_{(s_i^*)}|_{E_j}$ uniformly, so that we may choose some $\delta>0$ with $||\mu_{(t_i)}-\mu_{(s_i^*)} ||_{L^\infty(E_j)}<r$ as long as $t_j\in \overline{D(\delta,s_j^*)}$. Since $\varepsilon_j\rightarrow0$, we know that for all sufficiently large $j$ we have $D(\varepsilon_j, s_j)\subset D(\delta,s_j^*)$, and we choose $U_j=\overline{D(\varepsilon_j,s_j)}$ for such $j$. For the finitely many other indices $j$, we choose $U_j=D(\delta, s_j^*)\cap \overline{D(\varepsilon_j,s_j)}$. With this choice of $\prod_{i=1}^\infty U_i$, we have that $||\mu_{(t_i)}-\mu_{(s_i^*)} ||_{L^\infty(\mathbb{C})}<r$ for any $(t_i)\in\prod_{i=1}^\infty U_i$, as required.

\end{proof} 

We are now ready to prove Theorem \ref{main}. 
We remark that the singular values $\pm1$ are distinguished
 from other singular values of $g$ in that we do not 
have the freedom in Lemma \ref{folding_final_} to perturb
 $\pm1$. Therefore, the proof is simpler in the case
 when the correction map $\phi$ fixes both $\pm 1$. 
 We will first prove the theorem in this case, and 
 then extend the argument to cover the general case.

\begin{proof}[Theorem \ref{main} assuming 
	$\phi$ fixes $\pm 1$]
We have just shown that the map in (\ref{fixpoint})
is continuous and maps 
$\prod_{i=1}^\infty \overline{D(\varepsilon_i,s_i)}$ 
 into itself; 
this was arranged by definition of $\varepsilon_i$.
Thus Theorem \ref{fixed_point} implies this map 
has fixpoint.
A fixpoint of (\ref{fixpoint}) corresponds to some
choice of $(s_i^*)$ so that $\phi^{-1}_{\mu_{(s_i^*)}}
(s_i^*)=s_i$  for all singular values other than $\pm1$.
By assumption  this also holds if we set $(1)^* =1$
and $(-1)^*=-1$.
If $f(z) = g(\phi^{-1}(z))$, then $f$ is meromorphic 
and we have $f|_{S^*}=\psi\circ h\circ\psi^{-1}$, where
the map $\psi$ is defined by $\psi(s)=s^*$.
This proves Theorem 1 
 under the extra assumption that $\phi^{-1}$ fixes both $\pm1$.

\end{proof} 

\begin{proof}[Theorem \ref{main} in general]
Now we consider the case when the correction map
$\phi^{-1}$ does not necessarily  fix both $\pm1$.
Let $\delta >0$ be the distance from $\pm1$ to the 
remainder of the singular set (this is positive 
since the singular set is discrete). For each 
$x,y \in  \overline{D(-1, \delta/2)}
\times \overline{D(1,\delta/2)}$
let $\eta$ be a quasiconformal map so that 
$\eta(-1) = x$, $\eta(1)=y$ and $\eta$ is the 
identity outside 
$U=D(-1, \delta)  \cup D(1,\delta)$. Clearly we 
can do this with a dilation that is uniformly 
bounded independent of $\delta$, $x$ and $y$ and
is supported inside $U$.

We now wish to repeat the fixpoint argument 
above with $g$ replaced by $G= \eta \circ g$. This is 
still a quasi-regular map that depends on 
the parameters $\{ s_j^*\}$ and two new parameters
$x,y$. The $g$ preimages of  $U$ are contained 
in $T(r)$ for a uniform choice of $r$, so 
$G$ still has a dilatation that is  supported in a set
of small area and this area decays exponentially near 
$\infty$. Therefore the corresponding correction map 
$\phi$ still varies continuously in all the 
parameters. Moreover, $\phi$ will move each 
of the points $\pm 1$ by as little as we wish,
depending on our choice of $W$. Therefore we 
can arrange for $\phi$ to map $-1$ into  
$ \overline {D(-1,\delta/2)} $ 
for any $x \in   \overline {D(-1,\delta/2)} $, 
independent of how the other parameters are 
chosen. Similarly, $\phi$ maps $1$ into 
$\overline{  D(1, \delta/2)}$. 
The chosen closed disks around all 
the other (non $\pm1$)  singular values still map 
into themselves as before, so the fixpoint 
argument from above applies again.
More precisely, there is 
a choice of $x \in \overline {D(-1, \delta/2)}$,
$y \in \overline{ D(1, \delta/2)}$,
and 
$ \{s_j^*\} \in \overline{D(s_j, \varepsilon_j)} $
so that 
$\phi(-1)=x$, $\phi(1)=y$, and
 $ \phi(s_j) = s_j^*$ for all $j$.

Let $f(z) =  \eta(g(\phi^{-1}(z))) $. 
Then the definition of $\eta$ implies  
$$f(x) = \eta(g(\phi^{-1}(x))) = 
          \eta(g(-1)) = 
\begin{cases}
    x & \mbox{ if } g(-1)=-1, \\
    y & \mbox{ if } g(-1)=1, \\
   g(-1) & \mbox{ otherwise. }
\end{cases} 
$$
Similarly for $f(y)$. For other $s_j \in S$ such that   $s_j \not = \pm 1$, 
we have $\eta(s_j) =s_j$ (since $\eta$ is the 
identity away from $\pm 1$), so 
$$f(s_j^*) = \eta(g(\phi^{-1}(s_j^*))) = 
          \eta(g(s_j)) = 
\begin{cases}
    x & \mbox{ if } g(s_j)=-1, \\
    y & \mbox{ if } g(s_j)=1, \\
   g(s_j) & \mbox{ otherwise. }
\end{cases} 
$$
Thus if we set $\psi(-1)=x$, $\psi(1)=y$ and 
$\psi(s_j) = s_j^*$  for the other singular values, 
then  $f, \psi $ satisfy the 
conclusions of Theorem \ref{main}.

\end{proof}





%
%

\end{document}